\newcommand{\be}{\begin{eqnarray}}
\newcommand{\ee}{\end{eqnarray}}
\newcommand{\beq}{\begin{equation}}
\newcommand{\eeq}{\end{equation}}
\newcommand{\beqn}{\begin{equation*}}
\newcommand{\eeqn}{\end{equation*}}
\newcommand{\average}[1]{\langle#1\rangle}
\newcommand{\slot}{\,\cdot\,}
\providecommand{\abs}[1]{{\lvert#1\rvert}}
\def\bfSigma{\mathbf{\Sigma}}
\newtheorem{thm}{Theorem}
\newtheorem{prop}[thm]{Proposition}
\newtheorem{cor}[thm]{Corollary}
\newtheorem{lem}[thm]{Lemma}
\newtheorem{remark}[thm]{Remark}
\newcommand\cF{{\mathcal F}}
\newcommand\cH{{\mathcal H}}
\newcommand\cL{{\mathcal L}}
\newcommand\cP{{\mathcal P}}
\newcommand\cQ{{\mathcal Q}}
\newcommand\cS{{\mathcal S}}
\newcommand\cT{{\mathcal T}}
\newcommand\bC{{\mathbb C}}
\newcommand\bE{{\mathbb E}}
\newcommand\bN{{\mathbb N}}
\newcommand\bP{{\mathbb P}}
\newcommand\bR{{\mathbb R}}
\newcommand\bS{{\mathbb S}}
\newcommand\rT{{\mathrm T}}
\newcommand\rd{{\mathrm d}}
\newcommand\fm{{\mathfrak m}}
\newcommand{\ve}{\varepsilon}
\def\bfA{\mathbf{A}}
\def\bfB{\mathbf{B}}
\def\bfS{\mathbf{S}}
\def\bff{\mathbf{f}} 
\def\bft{\mathbf{t}}
\def\bfv{\mathbf{v}}
\def\bfone{\mathbf{1}}
\begin{document}

\title{A coupling approach to random circle maps expanding on the average}

\author[Mikko Stenlund]{Mikko Stenlund}
\address[Mikko Stenlund]{
Department of Mathematics and Statistics, P.O.\ Box 68, Fin-00014 University of Helsinki, Finland.}
\email{mikko.stenlund@helsinki.fi}
\urladdr{http://www.math.helsinki.fi/mathphys/mikko.html}

\author[Henri Sulku]{Henri Sulku}
\address[Henri Sulku]{
Department of Mathematics and Statistics, P.O.\ Box 68, Fin-00014 University of Helsinki, Finland.}
\email{henri.sulku@helsinki.fi}

\keywords{Random maps expanding on the average, coupling, memory loss, correlation decay, almost sure invariance principle}
\subjclass[2000]{37D25; 60F17}




\begin{abstract}
We study random circle maps that are expanding on the average. Uniform bounds on neither expansion nor distortion are required. We construct a coupling scheme, which leads to exponential convergence of measures (memory loss) and exponential mixing. Leveraging from the structure of the associated correlation estimates, we prove an almost sure invariance principle for vector-valued observables. The motivation for our paper is to explore these methods in a nonuniform random setting.
\end{abstract}

\maketitle


\subsection*{Acknowledgements}
This research was funded by ERC Advanced Grant MPOES. Mikko Stenlund also received financial support from the Academy of Finland. He wishes to thank Carlangelo Liverani and the University of Rome ``Tor Vergata'' for their hospitality during the preparation of this manuscript.



\medskip
\section{Introduction}

In this paper we study random compositions of the form $T_{\omega_n}\circ\dots\circ T_{\omega_1}$, where each $T_{\omega_i}$ is a~$C^2$ circle mapping with no critical points ($T_{\omega_i}'\ne0$ everywhere), drawn independently of the others from a set $\{T_{\omega_1}\,:\,{\omega_1}\in\Omega\}$ according to a probability distribution $\rd\eta(\omega_1)$. We do not place uniform bounds on expansion or distortion that would hold from one map to the next. On the contrary, individual maps are allowed to contract locally and distort their images strongly (without a bound). To compensate for such individual freedom, we impose probabilistic conditions on the occurrence of these ``bad'' maps in the sequence. In particular, we require the maps to be expanding on the average, i.e., $\int \inf|T_{\omega_1}'|\,\rd\eta(\omega_1) > 1$, with integrable distortion. The precise, somewhat stronger, assumptions are laid out in the next section. We prove statistical properties including the existence of an absolutely continuous invariant measure, exponential memory loss and mixing, as well as an almost sure invariance principle for vector-valued observables. Prior studies entailing similar models include~\cite{Ohno_1983,Pelikan_1984,Morita_1985,KhaninKifer_1996,Buzzi_1999,Buzzi_2000,Kifer_2008}. After finishing the present manuscript, the authors have also learned of the very recent works~\cite{Froyland_etal_2012, Aiminio_etal_2013} on the subject, as well as the related~\cite{Tumel_2012}.

\medskip
The motivation for the paper is twofold. First, we wish to explore the suitability of the \emph{coupling method} in the above context of nonuniform random maps. Diverting from the papers mentioned, the primary instrument in our analysis is indeed coupling. The coupling method is a soft tool for establishing statistical properties pertaining to the issues of memory loss and correlation decay. In the field of dynamical systems it has been implemented in various works such as \cite{Young_1999,BressaudLiverani,Chernov_2006,OttStenlundYoung,Stenlund_2011,StenlundYoungZhang_2012} and many others. A transparent introduction to coupling for dynamical systems (in the most elementary setup) can be found in~\cite{Sulku_2012}. As to the second motivation, a question that arises naturally is whether other limit laws hold true; we wish to investigate the possibility of proving such laws for the present class of nonuniform random dynamical systems \emph{via correlation estimates}. It was shown in~\cite{Pene_2005,ChernovMarkarian_2006,Stenlund_2010} that a central limit theorem for Sinai billiards follows from correlation bounds involving suitable classes of observables. In~\cite{Stenlund_2012} a similar approach was taken to prove an almost sure invariance principle (ASIP) for both random and non-random billiard systems. Here we show that, for our system, an ASIP follows from the established correlation estimates with little added work. Yet, the last point is subtle: it depends on the particular form of the correlation estimates, obtained for particular classes of observables. Let us be fully clear that the (averaged) theorems on the Markov chain corresponding to the random maps at issue can certainly be obtained, for example, via spectral methods. Here we present a different approach, which we hope to be of use to other authors beyond the present setup.

\bigskip
\noindent{\it Structure of the paper.} The paper is organized as follows. In Section~\ref{sec:preliminaries} we introduce the model precisely and record some mathematical preliminaries necessary for understanding the results and the proofs in the rest of the paper. In Section~\ref{sec:results} we present our main results. In the following Sections~\ref{sec:stationary}--\ref{sec:degeneracy} we prove these results in the same order as they appear in Section~\ref{sec:results}.


\medskip
\section{Preliminaries}\label{sec:preliminaries}
Let $\bS$ denote the circle obtained by identifying the endpoints of the unit interval $[0,1]$. The Lebesgue measure on $\bS^1$ is denoted by~$\fm$.

\medskip
Given $\alpha\in(0,1)$, we denote by $C^\alpha$ the set of functions $\bS\to\bR$ (or $\bS\to\bC$) that are Hölder continuous with exponent~$\alpha$. The corresponding Hölder constant is denoted by $|f|_\alpha$. We also introduce the norm
\beqn
\| f \|_\alpha = |f|_\alpha + \|f\|_\infty.
\eeqn

\medskip

To define the compositions $T_{\omega_n}\circ\dots\circ T_{\omega_1}$ of the Introduction properly,
let $(\Omega,\cF,\eta)$ be a probability space and, for each $\omega_1\in\Omega$, let the map $T_{\omega_1}:\bS\to\bS$ be $C^2$ without critical points (with additional assumptions to follow shortly). Then consider compositions of such maps drawn from the product space. We assume the map $\Omega\times\bS\to\bS:(\omega_1,x)\mapsto T_{\omega_1}x$ to be measurable, and define the quantities
\beqn
\lambda_{\omega_i} = \inf |T_{\omega_i}'|
\quad\text{and}\quad
\Delta_{\omega_i} = \left\| \frac{T_{\omega_i}''}{(T_{\omega_i}')^2} \right\|_\infty .
\eeqn
Notice that $\lambda_{\omega_i}$ measures the dilation and $\Delta_{\omega_i}$ the distortion of the map $T_{\omega_i}$.

\medskip
Expectations with respect to the ``selection distribution'' $\eta$ will often be denoted by angular brackets $\average{\slot}$. That is, for any measurable function $h:\Omega\to\bR:h(\omega_1) = h_{\omega_1}$, we write
\beqn
\langle h \rangle = \int h_{\omega_1}\,\rd\eta(\omega_1).
\eeqn

\bigskip
\noindent{\bf Standing assumption.} {\it Throughout the paper, we assume that the moment conditions
\beq\label{eq:moment_conditions}
\langle\lambda^{-2} \rangle < 1 \quad\text{and}\quad  \langle\Delta^2\rangle < \infty
\eeq
be satisfied.
}

\bigskip
\noindent In particular,~$\average{\lambda}>1$, meaning that the composed maps are expanding on the average. An individual map, on the other hand, could have regions of strong contraction,~$T_{\omega_i}'\approx 0$, as well as those of strong distortion,~$|T_{\omega_i}''|\gg (T_{\omega_i}')^2$. 

\bigskip
The sequence $(X_n)_{n\geq 0}$ with
\beqn
X_n(\omega,x) = X_n(\omega_1,\dots,\omega_n,x)= T_{\omega_n}\circ\dots\circ T_{\omega_1}(x),
\eeqn
where $\omega=(\omega_n)_{n\geq 1}\in \Omega^\bN$ and $x\in\bS$, forms a homogeneous Markov chain with state space $\bS$. (We set $X_0(\omega,x) = x$.)
The Markov operator $\cQ$ corresponding to $(X_n)_{n\ge0}$ has the expression
\beqn
\cQ f(x) = \int_\Omega f(T_{\omega_1} x)\,\rd \eta(\omega_1)
\eeqn
for any bounded measurable function $f$. Let us also define the operator $\cP$ as
\beqn
\cP g(x) = \int \cL_{\omega_1} g(x) \,\rd\eta(\omega_1), \quad g\in L^1(\fm),
\eeqn
where $\cL_{\omega_i}:L^1(\fm)\to L^1(\fm)$ stands for the transfer operator of the map $T_{\omega_i}$ associated to the Lebesgue measure~$\fm$, that is,
\beqn
\cL_{\omega_i} g(x)=\sum_{y\in T_{\omega_i}^{-1}\{x\}} \frac{g(y)}{|T_{\omega_i}'(y)|}\ .
\eeqn
As is straightforward to check, it is the dual of $\cQ$ in the sense that
\beqn
\int g\cdot \cQ f\,\rd \fm = \int \cP g\cdot f\,\rd \fm.
\eeqn

\medskip
A probability distribution $\mu$ is stationary for the Markov chain $(X_n)_{n\geq 0}$ if
\beqn
\int \cQ f\,\rd\mu = \int f\,\rd\mu
\eeqn
for all bounded measurable $f$. If $\mu$ is absolutely continuous with density $\phi$ (with respect to $\fm$), the stationarity condition reduces to
\beqn
\cP\phi = \phi.
\eeqn

\medskip

For brevity, we will write $\bE$ for the expectation with respect to the product measure~$\bP = \eta^\infty$. That is, if $h:\Omega^n\to\bR$ is measurable, then
\beqn
\bE [h] = \int h(\omega_1,\dots,\omega_n)\,\rd\eta^n(\omega_1,\dots,\omega_n) .
\eeqn
We denote by $P^\mu$ the measure induced on the path space of the Markov chain $(X_n)_{n\geq 0}$ with initial measure $\mu$. The corresponding expectation we denote $E^\mu$. That is,
\beqn
E^\mu[h(X_0,\dots,X_n)] = \int h(X_0,\dots,X_n)(\omega_1,\dots,\omega_n,x)\,\rd\eta^n(\omega_1,\dots,\omega_n)\,\rd\mu(x),
\eeqn
for any bounded measurable $h:\bS^{n+1}\to\bR$ and any $n\geq 0$.

\medskip
Finally, $\sigma$ will denote the usual left shift on $\Omega^\bN$. That is,
\beqn
(\sigma\omega)_n = \omega_{n+1}, \quad n\geq 1.
\eeqn


\medskip
\section{Results}\label{sec:results}

\medskip
The next theorem is our first result.
\begin{thm}\label{thm:stationary}
The Markov chain $(X_n)_{n\geq 0}$ admits an absolutely continuous stationary probability distribution~$\mu$ whose density $\phi$ is Lipschitz continuous and bounded away from zero.
\end{thm}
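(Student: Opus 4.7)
\emph{Proof plan.} I will produce $\phi$ as a uniform Cesaro limit of the iterates $\phi_n:=\cP^n\mathbf 1$, and extract Lipschitz regularity and positivity from uniform-in-$n$ estimates on the random densities $\psi_n(\omega,x):=\cL_{\omega_n}\cdots\cL_{\omega_1}\mathbf 1(x)=\sum_{y:T^{(n)}y=x}|(T^{(n)})'(y)|^{-1}$, with $T^{(n)}:=T_{\omega_n}\circ\dots\circ T_{\omega_1}$, noting that $\phi_n=\bE[\psi_n]$ and $\int_\bS\psi_n(\omega,\cdot)\,\rd\fm=1$ for every $\omega$.

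The pivotal estimate is a random distortion bound. The chain rule applied to $(T^{(n)})''/((T^{(n)})')^2$ yields
\[
\sup_y\left|\frac{(T^{(n)})''(y)}{((T^{(n)})'(y))^{2}}\right|\ \le\ \tilde D_n(\omega)\ :=\ \sum_{j=1}^n\frac{\Delta_{\omega_j}}{\lambda_{\omega_{j+1}}\cdots\lambda_{\omega_n}},
\]
so each summand in the representation of $\psi_n$, and therefore $\psi_n$ itself (being a positive sum of branches with the same log-Lipschitz constant), is log-Lipschitz with constant $\tilde D_n(\omega)$. Combined with $\int\psi_n=1$ on the unit-length circle, this promotes to the pointwise sandwich $e^{-\tilde D_n(\omega)/2}\le\psi_n(\omega,x)\le e^{\tilde D_n(\omega)/2}$ and $|\psi_n'(\omega,x)|\le\tilde D_n(\omega)\,\psi_n(\omega,x)$.

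The moment conditions~\eqref{eq:moment_conditions} together with Cauchy-Schwarz yield $\langle\lambda^{-1}\rangle<1$ and $\langle\Delta\rangle,\langle\Delta/\lambda\rangle<\infty$. Expanding $\bE[\tilde D_n]$ and $\bE[\tilde D_n^2]$ along the independent $\omega_j$'s produces convergent geometric series, and one obtains a constant $K$ (depending only on these averages) with $\bE[\tilde D_n]+\bE[\tilde D_n^2]\le K$ for every $n$. Applying Jensen's inequality to the convex function $t\mapsto e^{-t/2}$ gives
\[
\phi_n(x)\ =\ \bE[\psi_n(x)]\ \ge\ \bE\!\left[e^{-\tilde D_n/2}\right]\ \ge\ e^{-\bE[\tilde D_n]/2}\ \ge\ e^{-K/2}=:c\ >\ 0
\]
uniformly in $n$ and $x$. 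For the upper Lipschitz bound I iterate the one-step inequality $|(\cL_\omega g)'|\le\lambda_\omega^{-1}\cL_\omega|g'|+\Delta_\omega\cL_\omega|g|$ along the random composition, take expectation, and then close a Cauchy-Schwarz in $\omega$ against $\tilde D_n$ using the second-moment control; this delivers $\|\phi_n'\|_\infty\le C$ uniformly in $n$.

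Once $\{\phi_n\}$ is equi-Lipschitz and uniformly bounded between $c$ and some constant, Arzel\`a-Ascoli yields a uniformly convergent subsequence of the Cesaro averages $\bar\phi_N:=N^{-1}\sum_{n<N}\phi_n$ to some continuous $\phi$ inheriting the same bounds; the identity $\cP\bar\phi_N-\bar\phi_N=N^{-1}(\phi_N-\mathbf 1)\to 0$ uniformly forces $\cP\phi=\phi$. Then $\mu:=\phi\,\rd\fm$ is an absolutely continuous stationary probability distribution whose density $\phi$ is Lipschitz and bounded away from zero, as claimed. The main technical hurdle is the upper Lipschitz bound: the pointwise envelope $\psi_n\le e^{\tilde D_n/2}$ is exponential in $\tilde D_n$, so one must keep the iterated inequality in its additive form and handle its two terms separately rather than pull $\|\psi_n\|_\infty$ out of the expectation, because $\langle\Delta^2\rangle<\infty$ does not by itself provide the exponential moments of $\tilde D_n$ that a naive closure would demand.
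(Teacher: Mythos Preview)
Your approach is essentially the paper's: produce $\phi$ as an Arzel\`a--Ascoli limit of Ces\`aro averages of $\cP^n\bfone$, with the uniform $C^1$ bound coming from the iterated inequality $|(\cL_{\omega_n}\cdots\cL_{\omega_1}\bfone)'|\le R_n\,\cL_{\omega_n}\cdots\cL_{\omega_1}\bfone$ (your $\tilde D_n$ is the paper's $R_n$) together with the second-moment bound $\sup_n\bE[R_n^2]<\infty$; your Jensen argument for the lower bound is exactly the paper's Corollary~\ref{cor:stationary_lowerbound}. The one step you leave imprecise is the closure of the Lipschitz bound: your Cauchy--Schwarz $\bE[R_n\psi_n(x)]\le\bE[R_n^2]^{1/2}\bE[\psi_n(x)^2]^{1/2}$ still requires a polynomial-in-$R_n$ control on $\psi_n$, and the missing observation (which the paper uses) is that integrating the pointwise bound gives $\|\psi_n'\|_{L^1(\fm)}\le R_n$, whence $\|\psi_n\|_\infty\le 1+R_n$ since $\psi_n$ is a probability density; this yields $\|\psi_n'\|_\infty\le R_n(1+R_n)$ directly, and no Cauchy--Schwarz is needed.
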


\begin{remark}
In Corollary~\ref{cor:stationary_lowerbound} we establish a ``quantitative'' lower bound on $\phi$ depending only on the ``system constants'' appearing in~\eqref{eq:moment_conditions}.
\end{remark}

From here on, $\mu$ and $\phi$ will always refer to the objects above. Once the existence of~$\phi$ has been established, it is interesting to study convergence of initial densities toward it. To that end, we first work with individual sequences~$\omega$.

\begin{thm}\label{thm:seq_bounds}
There exists such a constant $\theta\in(0,1)$ that the following holds.
Let $\alpha\in(0,1)$. For almost every $\omega$, there exists $C(\omega)>0$ such that
\beq\label{eq:seq_conv}
\|\cL_{\omega_n}\cdots \cL_{\omega_1} (\psi^1-\psi^2)\|_{L^1(\fm)} \le C(\omega)\max(\|\psi^1\|_\alpha\,,\|\psi^2\|_\alpha)\theta^{\alpha n}
\eeq
for all $n\ge 0$ and all probability densities $\psi^1,\psi^2\in C^\alpha$. Moreover, given a probability distribution~$\rd\nu = \psi\,\rd\fm$ with $\psi\in C^\alpha$,
\beq\label{eq:seq_mix}
\left|\int f\cdot g\circ T_{\omega_n}\circ\cdots\circ T_{\omega_1}\,\rd\nu - \int f\,\rd\nu \int g\circ T_{\omega_n}\circ\cdots\circ T_{\omega_1}\,\rd\nu \right| \le C(\omega)\|\psi\|_\alpha \|f\|_\alpha\|g\|_\infty \theta^{\alpha n}
\eeq
for all $n\ge 0$, and all complex-valued functions $f\in C^\alpha$ and $g\in L^\infty$.
\end{thm}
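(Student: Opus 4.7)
The strategy is to deduce~\eqref{eq:seq_conv} from a coupling construction and then reduce~\eqref{eq:seq_mix} to~\eqref{eq:seq_conv}.

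For~\eqref{eq:seq_conv}, I would build a quenched coupling of the two chains starting from $\psi^1$ and $\psi^2$: at each step $i$, match as much mass as possible between the two pushforwards $\cL_{\omega_i}\cdots\cL_{\omega_1}\psi^j$; the matched portion cancels thereafter, so the remaining $L^1$-difference is bounded by the uncoupled fraction. Two ingredients drive the coupling. The first is a quenched Lasota--Yorke-type estimate ensuring that, for $\bP$-a.e.\ $\omega$, the iterates stay in a bounded subset of $C^\alpha$. The second is a per-step lower bound on the coupled mass, effective on a positive-$\eta$-measure set of $\omega_i$. Both rest on the moment conditions~\eqref{eq:moment_conditions}: $\langle\lambda^{-2}\rangle<1$ controls local contractions and $\langle\Delta^2\rangle<\infty$ controls distortion, with Birkhoff-type tools applied to $\log\lambda_{\omega_i}$ and $\Delta_{\omega_i}$ supplying the needed ergodic averaging. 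Because the coupling produces a bound in which the input regularity factors out multiplicatively, one obtains
\[
\|\cL_{\omega_n}\cdots\cL_{\omega_1}(\psi^1-\psi^2)\|_{L^1(\fm)} \le \max(\|\psi^1\|_\alpha,\|\psi^2\|_\alpha)\,R_n(\omega),
\]
where $R_n(\omega)$ depends only on $\omega$ (and on the fixed invariant density $\phi$) and obeys $\bE[R_n]\le K\tilde\theta^n$ for some $\tilde\theta\in(0,1)$ and $K<\infty$ depending only on the system constants. To upgrade this annealed bound to the quenched statement, I would fix any $\theta\in(\tilde\theta^{1/\alpha},1)$ and apply Markov's inequality: $\bP(R_n>\theta^{\alpha n})\le K(\tilde\theta/\theta^\alpha)^n$. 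This is summable, so Borel--Cantelli furnishes an a.s.-finite $C(\omega)$ with $R_n(\omega)\le C(\omega)\theta^{\alpha n}$ for all $n$ and $\bP$-a.e.\ $\omega$, which gives~\eqref{eq:seq_conv}.

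The mixing estimate~\eqref{eq:seq_mix} reduces to~\eqref{eq:seq_conv} by a standard dual/decomposition argument. Replacing $f$ by $f-\int f\,d\nu$ (which leaves both sides of~\eqref{eq:seq_mix} unchanged and at worst doubles $\|f\|_\alpha$), we may assume $\int f\,d\nu=0$; by duality the left-hand side then equals $\int g\cdot\cL_{\omega_n}\cdots\cL_{\omega_1}(f\psi)\,d\fm$. The function $h:=f\psi$ has zero $\fm$-mean with $\|h\|_\alpha\lesssim\|f\|_\alpha\|\psi\|_\alpha$. Splitting $h=h_+-h_-$ into its positive and negative parts of common $L^1$-mass $m$, the functions $h_\pm/m$ are $C^\alpha$ probability densities (since $x\mapsto\max(x,0)$ is $1$-Lipschitz, preserving $\alpha$-Hölder regularity) with $\|h_\pm/m\|_\alpha\le\|h\|_\alpha/m$. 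Applying~\eqref{eq:seq_conv} to the pair $(h_+/m,h_-/m)$ yields $\|\cL_{\omega_n}\cdots\cL_{\omega_1} h\|_{L^1}\le C(\omega)\|h\|_\alpha\theta^{\alpha n}$, and pairing with $g\in L^\infty$ concludes the argument.

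I expect the main technical obstacle to be the quenched Lasota--Yorke-type regularity estimate underlying the coupling. In the nonuniform random setting, individual maps may contract locally and distort unboundedly, so the usual deterministic estimates fail; instead one must exploit~\eqref{eq:moment_conditions} together with the ergodicity of the shift $\sigma$ on $\Omega^\bN$ to control the $C^\alpha$-regularity of iterates along $\bP$-typical sequences. Once such control is in place, the coupling construction and the subsequent Borel--Cantelli promotion should proceed routinely.
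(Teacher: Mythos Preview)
Your overall architecture matches the paper's: a coupling argument for~\eqref{eq:seq_conv}, a Borel--Cantelli promotion from an annealed exponential bound to the quenched statement, and a reduction of~\eqref{eq:seq_mix} to~\eqref{eq:seq_conv}. But the coupling step as you describe it has a gap, and the paper's implementation differs in instructive ways.

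The phrase ``at each step match as much mass as possible'' is the problem. After maximal coupling the remainder densities $(\psi^j_n-\min(\psi^1_n,\psi^2_n))/(1-m)$ vanish on open sets; their H\"older norms are not controlled multiplicatively by those of~$\psi^j_n$, and a Lasota--Yorke bound for the \emph{full} iterates $\cL_{\omega_n}\cdots\cL_{\omega_1}\psi^j$ says nothing about these remainders, so the iteration stalls. (Your later phrase ``per-step lower bound on the coupled mass'' suggests a different, fixed-fraction scheme, which is closer to what is needed; the two descriptions are in tension.) The paper resolves this by working in the log-H\"older classes $\cH_K=\{\psi>0:|\log\psi|_\alpha\le K\}$ and coupling only at random good times rather than every step. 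With $S_n=\prod_i\lambda_{\omega_i}^{-1}$ and $R_n=\sum_i\Delta_{\omega_i}\prod_{j>i}\lambda_{\omega_j}^{-1}$ one has $|\log\cL_{\omega_n}\cdots\cL_{\omega_1}\psi|_\alpha\le S_n^\alpha|\log\psi|_\alpha+R_n$; a coupling time is declared when $S_n^\alpha K''+R_n\le K$. Both pushforwards then lie in $\cH_K$, hence are $\ge 2\kappa=e^{-K}$, and one subtracts the \emph{constant} $\kappa$: the normalized remainders $(\psi^j_n-\kappa)/(1-\kappa)$ land in a fixed $\cH_{K'}$, so the scheme iterates cleanly. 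The inter-coupling times have exponential tails because the dominating Markov chain $L_n=\lambda_{\omega_n}^{-1}L_{n-1}+\Delta_{\omega_n}$ satisfies $P^\ell(L_m>K-1\ \forall\,m\le n)\le \ell q^n/(K-1)$ with $q=\langle\lambda^{-1}\rangle+\langle\Delta\rangle/(K-1)<1$ under~\eqref{eq:moment_conditions}; this yields $\bP(N_n<[t\alpha n])\le D\vartheta^n$ for the coupling count~$N_n$, and summing gives an a.s.-finite $\tilde n(\omega)$ after which $N_n\ge[t\alpha n]$ for all $n$ --- your Borel--Cantelli step in disguise.

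Two smaller differences. To pass from arbitrary $C^\alpha$ densities to strictly positive ones in $\cH_1$ the paper uses the shift $\psi_h=(\psi+h)/(1+h)$ with $h\ge|\psi|_\alpha$, which is what makes the log-H\"older bookkeeping above available; a direct $C^\alpha$ Lasota--Yorke route would have to reproduce this regularity control for the remainders some other way. For~\eqref{eq:seq_mix} the paper, rather than splitting $f\psi$ into positive and negative parts, first centers $f$ with respect to $\fm$, writes $f=2|f|_\alpha(\tilde f-\bfone)$ with $\tilde f=(f+2|f|_\alpha)/(2|f|_\alpha)\in\cH_1$, and then changes reference measure from $\fm$ to $\nu$ in a second step. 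Your positive/negative-part reduction is a legitimate alternative once~\eqref{eq:seq_conv} is in hand for arbitrary (not necessarily strictly positive) $C^\alpha$ densities.
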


Here~\eqref{eq:seq_conv} states that, for typical sequences~$\omega$, the $L^1$-distance between the push-forwards of two Hölder continuous densities tends to zero exponentially.
The bound in~\eqref{eq:seq_mix} states that, with respect to any probability measure having a Hölder continous density, the random variables~$f$ and~$g\circ T_{\omega_n}\circ\cdots\circ T_{\omega_1}$ become asymptotically decorrelated at an exponential rate. The method of proof we use is coupling. Theorem~\ref{thm:seq_bounds} can also be obtained by different means, namely that of thermodynamic formalism and Hilbert projective cones; see~\cite{KhaninKifer_1996,Kifer_2008} and, for a piecewise smooth case,~\cite{Buzzi_1999}.

\medskip
Once the sequence-wise bounds have been obtained, related results can be established for the Markov chain $(X_n)_{n\ge 0}$:

\begin{thm}\label{thm:bounds}
There exist a constant $\theta\in(0,1)$ and, for any $\alpha\in(0,1)$, a constant $C>0$ such that
\beq\label{eq:conv}
\|\cP^n \psi-\phi\|_{L^1(\fm)} \le C\|\psi\|_\alpha\theta^{\alpha n}
\eeq
for all $n\ge 0$ and all probability densities $\psi\in C^\alpha$. Moreover, 
\beq\label{eq:mix}
\left|\int f\cdot \cQ^n g\,\rd\mu -\int f\,\rd\mu\int g\,\rd\mu \right| \le C \|f\|_\alpha\|g\|_\infty\theta^{\alpha n}
\eeq
for all $n\ge 0$, and all complex-valued functions $f\in C^\alpha$ and $g\in L^\infty$.
\end{thm}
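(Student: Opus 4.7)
The plan is to deduce both bounds of Theorem~\ref{thm:bounds} by averaging their sequence-wise counterparts from Theorem~\ref{thm:seq_bounds} over $\omega$, combined with the regularity of the stationary density $\phi$ from Theorem~\ref{thm:stationary}.

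For~\eqref{eq:conv}, I would use that $\phi = \cP^n\phi$ and Fubini to write $\cP^n\psi - \phi = \bE[\cL_{\omega_n}\cdots\cL_{\omega_1}(\psi-\phi)]$. Applying the triangle inequality under the integral gives
\beqn
\|\cP^n\psi-\phi\|_{L^1(\fm)}\le \bE\bigl[\|\cL_{\omega_n}\cdots\cL_{\omega_1}(\psi-\phi)\|_{L^1(\fm)}\bigr],
\eeqn
and inserting~\eqref{eq:seq_conv} reduces the task to showing $\bE[C(\slot)]<\infty$. This integrability is not part of the statement of Theorem~\ref{thm:seq_bounds} and must be extracted from its proof: the coupling construction typically produces a pointwise estimate of the form $\mathrm{const}\cdot\sum_{k\ge n}M_k(\omega)$, where $\bE[M_k]$ decays exponentially in~$k$. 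In practice I would prove an expectation-form analogue of~\eqref{eq:seq_conv} alongside the almost-sure version and invoke it here.

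For~\eqref{eq:mix}, the duality $\int g\cdot\cQ^n f\,\rd\fm = \int\cP^n g\cdot f\,\rd\fm$ combined with $\rd\mu = \phi\,\rd\fm$ yields
\beqn
\int f\cdot\cQ^n g\,\rd\mu - \int f\,\rd\mu\int g\,\rd\mu
= \int\Bigl[\cP^n(f\phi) - \Bigl({\textstyle\int} f\phi\,\rd\fm\Bigr)\phi\Bigr]\cdot g\,\rd\fm,
\eeqn
whose modulus is controlled by $\|g\|_\infty\cdot \|\cP^n(f\phi) - (\int f\phi\,\rd\fm)\,\phi\|_{L^1(\fm)}$. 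To bound the latter I would extend~\eqref{eq:conv} from probability densities to arbitrary $h\in C^\alpha$: splitting into real and imaginary parts, and using that $\phi$ is Lipschitz and bounded below by some $\phi_{\min}>0$ (Theorem~\ref{thm:stationary}), I can pick $A$ proportional to $\|h\|_\infty/\phi_{\min}$ so that $h+A\phi\ge 0$, in which case $h+A\phi = (A+\int h\,\rd\fm)\tilde\psi$ for a $C^\alpha$ probability density $\tilde\psi$ with $\|\tilde\psi\|_\alpha\lesssim \|h\|_\alpha$. Since $\cP^n\phi=\phi$, linearity and~\eqref{eq:conv} then give $\|\cP^n h - (\int h\,\rd\fm)\phi\|_{L^1(\fm)}\le C'\|h\|_\alpha\theta^{\alpha n}$. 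Applied to $h = f\phi$, together with the elementary estimate $\|f\phi\|_\alpha\le \|f\|_\alpha\|\phi\|_\infty + \|f\|_\infty\|\phi\|_\alpha$, this finishes the proof.

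The principal obstacle is the integrability (or the expectation-form analogue) of the random constant $C(\omega)$ in~\eqref{eq:seq_conv}; everything else in the argument — duality, positive shifting, and multiplicativity in the Hölder norm — is routine.
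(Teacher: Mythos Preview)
Your proposal is correct and matches the paper's approach: the paper explicitly remarks (just after stating the theorem) that Theorem~\ref{thm:bounds} does not literally follow from Theorem~\ref{thm:seq_bounds} for exactly the reason you flag, and instead derives both from a common intermediate pointwise bound $\chi(n;1,\omega)=1_{\{\tilde n(\omega)>n\}}+(1-\kappa)^{t\alpha n-1}$ whose expectation is controlled by $\bP(\tilde n>k)\le D'\vartheta^k$. Your derivation of~\eqref{eq:mix} via duality and positive shifting against $\phi$ is a minor rearrangement of the paper's route (which first establishes~\eqref{eq:seq_mix} via the shift $\tilde f=(f+2|f|_\alpha)/(2|f|_\alpha)\in\cH_1$ and then averages over~$\omega$), but the underlying ideas are the same.
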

By~\eqref{eq:conv}, the push-forwards of Hölder continuous initial densities converge in $L^1$ to the Lipschitz continuous invariant density at an exponential rate, while pair correlations with respect to the stationary distribution decay exponentially by~\eqref{eq:mix}.
In the present formulation, Theorem~\ref{thm:bounds} does strictly speaking not follow from Theorem~\ref{thm:seq_bounds}, because we do not claim that $C(\omega)$ has finite expectation. Rather, we will prove the two results in parallel, as consequences of common intermediate bounds.

\medskip
By Theorem~\ref{thm:bounds}, the measure~$\mu$ is ergodic. It is standard that distinct ergodic measures are mutually singular. Since~$\mu$ is equivalent to $\fm$ by Theorem~\ref{thm:stationary}, we get the following corollary:
\begin{cor}
The measure~$\mu$ is the unique absolutely continuous ergodic measure.
\end{cor}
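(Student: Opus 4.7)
\medskip
\noindent\textbf{Proof proposal.} The result is an essentially formal consequence of the two preceding theorems; the hard work has already been done in Theorems~\ref{thm:stationary} and~\ref{thm:bounds}. The plan is to run a standard "two ergodic stationary distributions are either equal or mutually singular" argument, combined with the fact that $\mu$ has a density bounded away from zero.

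First, I would record that $\mu$ is indeed ergodic. This is stated in the paragraph preceding the corollary and follows from the decay-of-correlations estimate~\eqref{eq:mix}: if $A \subset \bS$ were a $\cQ$-invariant set (i.e.\ $\cQ \mathbf{1}_A = \mathbf{1}_A$ in $L^1(\mu)$), then applying~\eqref{eq:mix} to $f = g = \mathbf{1}_A$ (approximated by Hölder functions by standard density) and letting $n\to\infty$ would give $\mu(A) = \mu(A)^2$, so $\mu(A) \in \{0,1\}$. Hence $\mu$ is ergodic for the Markov chain in the usual stationary-distribution sense.

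Next, suppose $\mu'$ is any absolutely continuous ergodic stationary probability distribution for $(X_n)_{n\ge 0}$. By Theorem~\ref{thm:stationary}, the density $\phi$ of $\mu$ is bounded away from zero, so $\mu$ is equivalent to $\fm$. Since $\mu' \ll \fm$, we have $\mu' \ll \mu$. On the other hand, the ergodic decomposition theorem for stationary distributions of a Markov chain states that distinct ergodic stationary distributions are mutually singular: indeed, if $\mu_1, \mu_2$ are both ergodic and stationary and $\mu_1 \ne \mu_2$, then there is a $\cQ$-invariant set $A$ with $\mu_1(A) = 1$ and $\mu_2(A) = 0$ (take, e.g., the set on which the ergodic averages of some bounded measurable $f$ with $\int f\,\rd\mu_1 \ne \int f\,\rd\mu_2$ converge to $\int f\,\rd\mu_1$). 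Applying this to $\mu$ and $\mu'$ and combining with $\mu' \ll \mu$ forces $\mu = \mu'$.

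The only conceivable obstacle is verifying the invoked ergodic-decomposition fact in the Markov-chain setting; but this is classical (via either the pointwise ergodic theorem on path space or the convex-set structure of stationary distributions, whose extreme points are precisely the ergodic ones), and requires no input from the specific dynamics at hand. So the corollary follows immediately upon assembling Theorems~\ref{thm:stationary} and~\ref{thm:bounds}.
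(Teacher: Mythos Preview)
Your proposal is correct and follows essentially the same route as the paper: the paper simply notes that $\mu$ is ergodic by Theorem~\ref{thm:bounds}, that distinct ergodic (stationary) measures are mutually singular, and that $\mu$ is equivalent to $\fm$ by Theorem~\ref{thm:stationary}, which is precisely the argument you spell out in more detail.
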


\medskip
Given sufficient information on the convergence of measures, it becomes natural to ask about the statistical properties of the limit distribution.
Indeed, the pair correlation bound in~\eqref{eq:mix} is key in our proof of the probabilistic limit theorem below. The investigation of the coupling technique aside (see Introduction), it is the main result of our paper. To the best of our knowledge, such a result has not appeared in the literature.

\pagebreak
\begin{thm}\label{thm:VASIP}
Fix a positive integer $d$. Let $\bff:\bS\to\bR^d$ be Hölder continuous with $\int \bff\,\rd\mu = 0$, and denote briefly
\beq\label{eq:A_n}
\bfA_n = \bff\circ X_n \ .
\eeq
There exists such a symmetric, semi-positive-definite, $d\times d$ matrix~$\bfSigma^2$ that the following  hold:
\begin{enumerate}
\item The matrix $\bfSigma^2$ is the limit covariance of $\frac1{\sqrt n}\sum_{k=0}^{n-1}\bfA_k$. That is, 
\beqn
\lim_{n\to\infty} \frac1n\,E^\mu\!\left(\sum_{k=0}^{n-1}\bfA_k \otimes \sum_{k=0}^{n-1}\bfA_k\right)=\bfSigma^2\ .
\eeqn
\medskip
\item The random variables $\frac1{\sqrt n}\sum_{k=0}^{n-1}\bfA_k$ converge in distribution, as $n\to\infty$, to a centered $\bR^d$-valued normal random variable with covariance $\bfSigma^2$.
\medskip
\item Given any $\lambda>\frac14$, there exists a probability space together with two $\bR^d$-valued processes $(\bfA^*_n)_{n\geq 0}$ and $(\bfB_n)_{n\geq 0}$ on it, for which the following statements are true:
\medskip
\begin{enumerate}
\item $(\bfA_n)_{n\geq 0}$ and $(\bfA^*_n)_{n\geq 0}$ have the same distribution.
\medskip
\item The random variables $\bfB_n$, $n\geq 0$, are independent, centered, and normally distributed with covariance $\bfSigma^2$.
\medskip
\item Almost surely, $\bigl|\sum_{k=0}^{n-1}\bfA^*_k - \sum_{k=0}^{n-1}\bfB_k\bigr| = o(n^\lambda)$.  
\end{enumerate}
\end{enumerate}

\end{thm}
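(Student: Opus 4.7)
\medskip
\noindent\textbf{Proof proposal.} The plan is to derive all three parts from the pair-correlation estimate~\eqref{eq:mix}, following the blueprint developed for billiards in~\cite{Stenlund_2012}.

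For part~(1), applying~\eqref{eq:mix} componentwise with $f=f_i$, $g=f_j$ (each centered since $\int\bff\,\rd\mu=0$) yields the summable bound $|E^\mu[f_i\cdot f_j\circ X_n]|\le C\|\bff\|_\alpha\|\bff\|_\infty\theta^{\alpha n}$. Setting
\[
\bfSigma^2 = E^\mu[\bff\otimes\bff] + \sum_{n\ge 1}\bigl(E^\mu[\bff\otimes\bff\circ X_n]+E^\mu[\bff\circ X_n\otimes\bff]\bigr),
\]
expanding $n^{-1}E^\mu[(\sum_{k<n}\bfA_k)^{\otimes 2}]$ via stationarity of~$\mu$, and applying a routine Cesàro argument identifies the limit with $\bfSigma^2$. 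Symmetry is built in, and semi-positive-definiteness follows from $v^\tra\bfSigma^2 v=\lim_n n^{-1}E^\mu[(v\cdot\sum_{k<n}\bfA_k)^2]\ge 0$ for every $v\in\bR^d$. Moreover, part~(2) will come \emph{gratis} from part~(3): fixing any $\lambda\in(\tfrac14,\tfrac12)$, the exact $N(0,\bfSigma^2)$-distribution of $n^{-1/2}\sum_{k<n}\bfB_k$ together with the a.s.\ estimate $o(n^\lambda)/\sqrt n\to 0$ transfers the CLT to $n^{-1/2}\sum_{k<n}\bfA^*_k$ and hence to the equidistributed chain $n^{-1/2}\sum_{k<n}\bfA_k$.

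The core of the proof is part~(3), for which I plan a block decomposition. Partition $\{0,\dots,n-1\}$ into alternating big blocks of length $p$ and small gap blocks of length $q$, and set $S_j$ to be the sum of $\bfA_k$ over the $j$-th big block. The decisive input is an approximate-independence estimate for the joint characteristic function $E^\mu[\prod_j e^{\imag\,\bft_j\cdot S_j}]$, obtained by iteratively applying~\eqref{eq:mix} to separate successive big blocks by their intermediate gap. Since $e^{\imag\,\bft\cdot\bff}$ is Hölder with norm $\lesssim 1+|\bft|\,\|\bff\|_\alpha$, and each conditional expectation $E^\mu[e^{\imag\,\bft_j\cdot S_j}\mid X_{\tau_j}=\cdot\,]$ can be realised as a bounded Hölder function of its argument (with norm controlled by iterating the transfer operator and exploiting the exponential contraction in~\eqref{eq:conv} of Theorem~\ref{thm:bounds}), a telescoping argument across the $L\sim n/p$ gaps of length $q$ produces a schematic bound
\[
\biggl|E^\mu\Bigl[\prod_{j=1}^L e^{\imag\,\bft_j\cdot S_j}\Bigr]-\prod_{j=1}^L E^\mu\bigl[e^{\imag\,\bft_j\cdot S_j}\bigr]\biggr|\le C L\bigl(1+\max_j|\bft_j|\bigr)^{\gamma}\theta^{\alpha q}
\]
for a fixed $\gamma>0$. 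A Berkes--Philipp-type conditional reconstruction (in the form used in~\cite{Stenlund_2012}) then enlarges the probability space and couples $(S_j)_j$ to an independent sequence $(S_j^\dagger)_j$ sharing its marginals, with controllable coupling error. A multivariate strong approximation (Einmahl, Zaitsev) next couples each $S_j^\dagger$ to an independent Gaussian vector of covariance $p\bfSigma^2+o(p)$, and the latter is split into $p$ i.i.d.\ copies of $N(0,\bfSigma^2)$ to assemble $(\bfB_n)_{n\ge 0}$. The small-block defect contributes $O(Lq)$ deterministically, and a tuning of $p$ as a suitable power of $n$ with $q\sim C\log n$ balances the three errors to $o(n^\lambda)$ as soon as $\lambda>\tfrac14$.

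The hard part will be the characteristic-function estimate above together with its use in the Berkes--Philipp coupling: the naive Hölder norm of $x\mapsto e^{\imag\,\bft\cdot S_j(\omega,x)}$ is not uniform in $p$ or in $|\bft|$, so the estimate has to be obtained via conditional expectations regularised by the transfer-operator contraction of Theorem~\ref{thm:bounds}, and then kept uniform over a range of $\bft$ wide enough for the subsequent multivariate Gaussian approximation to apply. The resulting three-way balance between approximate independence, Gaussian approximation and gap contribution is what fixes the threshold $\lambda>\tfrac14$.
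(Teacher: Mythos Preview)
Your outline is in the right spirit, and you have correctly located the crux --- a uniform characteristic-function decoupling estimate --- but the route you sketch diverges from the paper's, and your description of how to obtain that estimate is too vague at precisely the point where the real work lies.

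The paper does not carry out a Berkes--Philipp block argument by hand. Instead it invokes Gou\"ezel's abstract ASIP (stated in the paper as the theorem immediately after Theorem~\ref{thm:multi}), which packages all of the block decomposition, reconstruction, and Gaussian coupling into a single hypothesis: the bound~\eqref{eq:Gouezel} on differences of joint characteristic functions. All three items of Theorem~\ref{thm:VASIP} then follow at once from verifying~\eqref{eq:Gouezel}. The paper reduces this verification to the multiple-correlation bound of Theorem~\ref{thm:multi}, and that in turn is obtained by rewriting the correlation via weighted transfer operators $\hat\cP_{g_i}$, so that it becomes an instance of the \emph{pair} correlation bound~\eqref{eq:mix} with $f = g_m\hat\cP_{g_{m-1}}\cdots\hat\cP_{g_0}\bfone$ and $g = \cQ_{g_{m+1}}\cdots\cQ_{g_{m+k}}\bfone$. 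The entire difficulty is then Proposition~\ref{prop:Holder_bound}: showing $\sup_{n}|g_n\hat\cP_{g_{n-1}}\cdots\hat\cP_{g_0}\bfone|_\alpha < \infty$. This is done not through the $L^1$-contraction of~\eqref{eq:conv} (which controls no H\"older norm), but through a direct distortion computation (Lemma~\ref{lem:L_prod_bound}) that exploits the moment conditions~\eqref{eq:moment_conditions}; the identity of Lemma~\ref{lem:L_prod} recasts the weighted product as an ordinary transfer-operator product applied to $e^{V_n}\phi$, and then the distortion sums are controlled in expectation.

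Your proposal would also need exactly this H\"older-norm control, but you attribute it to ``transfer-operator contraction of Theorem~\ref{thm:bounds}'', which is not where it comes from: \eqref{eq:conv} and \eqref{eq:mix} are outputs of, not inputs to, the regularity analysis. If you pursue your direct route, you will have to reprove Proposition~\ref{prop:Holder_bound} (or an equivalent) and then redo by hand the coupling machinery that Gou\"ezel's theorem already supplies. The paper's approach buys you a much shorter argument at the cost of citing~\cite{Gouezel_2010}; yours would be self-contained but substantially longer, and the stated bound with prefactor $CL(1+\max_j|\bft_j|)^\gamma$ would still require the same transfer-operator estimate to justify.
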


\medskip
Item~(2) of the theorem is called an averaged (or annealed) central limit theorem and
item~(3) a vector-valued almost sure invariance principle with covariance~$\bfSigma^2$ and error exponent~$\lambda$. The ``almost surely'' in item (c) refers to the probability space on which the processes~$(\bfA^*_n)_{n\geq 0}$ and~$(\bfB_n)_{n\geq 0}$ are defined. Note that $\sum_{k=0}^{n-1}\bfB_k$ can be interpreted as the location of an $\bR^d$-valued Brownian motion at time $n$. The almost sure invariance principle implies several other limit results, which we do not list here; see~\cite{Strassen_1964,Billingsley_convergence,PhilippStout_1975,LaceyPhilipp_1990}. 

\medskip
A standard computation in item~(1) yields the formula
\beqn
\bfSigma^2 =
\int \bff \otimes \bff \, \rd \mu
+  \sum_{m=1}^\infty \int \bigl(\bff \otimes \cQ^m\bff + \cQ^m\bff \otimes \bff \bigr) \, \rd \mu \ .
\eeqn
The question arises whether this matrix is non-degenerate.
\begin{lem}\label{lem:degeneracy}
Consider a nonzero vector $\bfv\in\bR^d$. The matrix $\bfSigma^2$ is degenerate in the direction~$\bfv$, i.e., $\bfv^\rT\bfSigma^2\bfv = 0$, if and only if there exists a Hölder continuous function $g:\bS\to\bR$ satisfying
\beqn
\bfv^\rT\bff(x) = g(x) - g(T_{\omega_1} x)
\eeqn
for all $x$ and almost all $\omega_1$. (Here the superscript $\rT$ denotes transposition.)
\end{lem}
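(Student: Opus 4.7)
\medskip
The \emph{if} direction is immediate. Suppose $h := \bfv^\rT\bff$ equals $g - g\circ T_{\omega_1}$ for a bounded $g$ and $\eta$-a.e.\ $\omega_1$. Along every realization the Birkhoff sum telescopes,
\[
\sum_{k=0}^{n-1} h(X_k) = g(X_0) - g(X_n),
\]
so it is $O(1)$ in $n$, and $\bfv^\rT\bfSigma^2\bfv = \lim_n n^{-1}E^\mu\bigl[(g(X_0)-g(X_n))^2\bigr] = 0$.

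\medskip
For the converse, set $h = \bfv^\rT\bff$ and note $\int h\,\rd\mu = 0$. The plan is to build $g$ as the formal series $g := \sum_{n=0}^\infty \cQ^n h$, which tautologically solves the \emph{averaged} coboundary equation
\[
g - \cQ g = h.
\]
With such a $g$ in hand, define $M_k := g(X_k) - \cQ g(X_{k-1}) = g(X_k) - E^\mu[g(X_k)\mid X_0,\ldots,X_{k-1}]$, yielding the martingale--coboundary decomposition
\[
S_n := \sum_{k=0}^{n-1} h(X_k) = g(X_0) - g(X_n) + \sum_{k=1}^{n} M_k .
\]
Under $P^\mu$, $(M_k)_{k\ge 1}$ is a stationary sequence of martingale differences with respect to the natural filtration of $(X_k)$. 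Orthogonality, boundedness of $g$, and a Cauchy--Schwarz bound on the cross term give
\[
\bfv^\rT\bfSigma^2\bfv = \lim_{n\to\infty} \frac{1}{n}\, E^\mu[S_n^2] = E^\mu[M_1^2],
\]
so the hypothesis forces $M_1 = 0$ $P^\mu$-a.s. Combined with $\cQ g = g - h$ this becomes $g(T_{\omega_1}x) = g(x) - h(x)$ for $(\mu\otimes\eta)$-a.e.\ $(x,\omega_1)$. Fubini, the equivalence $\mu \sim \fm$ from Theorem~\ref{thm:stationary}, and joint continuity of $g$, $h$, and $(\omega_1,x)\mapsto T_{\omega_1}x$ then upgrade this to the pointwise identity claimed in the lemma, holding for every $x$ and $\eta$-a.e.\ $\omega_1$.

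\medskip
The substantive difficulty is to ensure that the series defining $g$ converges to a \emph{Hölder} continuous function. Uniform decay $\|\cQ^n h\|_\infty \le C\theta^n$ holds whenever $\int h\,\rd\mu = 0$ by coupling a chain started at an arbitrary point with one started from $\mu$---the very mechanism driving Theorems~\ref{thm:seq_bounds}--\ref{thm:bounds}---and already renders $g$ a uniform limit of continuous functions, hence continuous. The genuine obstacle is Hölder regularity: the standing moment conditions~\eqref{eq:moment_conditions} do not control $\sup |T_{\omega_1}'|$, so the seminorm $|\cQ^n h|_\alpha$ need not stay bounded in $n$. I would circumvent this by the standard two-scale interpolation
\[
|\cQ^n h(x) - \cQ^n h(y)| \le \min\!\Bigl(\,2\|\cQ^n h\|_\infty,\ |\cQ^n h|_\alpha\,|x-y|^\alpha\Bigr),
\]
pairing the uniform decay with an at-most-exponential bound on $|\cQ^n h|_\alpha$ (from the chain rule together with~\eqref{eq:moment_conditions} and distortion-type control for circle maps), and optimizing the cutoff $N=N(|x-y|)$ in the split $g = \sum_{n<N}\cQ^n h + \sum_{n\ge N}\cQ^n h$. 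The outcome is $g \in C^{\alpha'}$ for some $\alpha' \in (0,\alpha]$, which is all the lemma requires.
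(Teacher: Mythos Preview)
There is a genuine gap in your construction of $g$. The series $\sum_{n\ge 0}\cQ^n h$ need not converge, even when the degeneracy hypothesis $\bfv^\rT\bfSigma^2\bfv = 0$ holds. The operator $\cQ$ is the \emph{forward} Markov operator, $\cQ h(x) = \int h(T_{\omega_1}x)\,\rd\eta(\omega_1)$; it has no contraction on $L^\infty$ or $C^\alpha$, and your claimed bound $\|\cQ^n h\|_\infty \le C\theta^n$ is false under the standing assumptions. Take the simplest admissible example: a single deterministic map $T(x)=2x$ (so $\eta$ is a point mass, $\lambda=2$, $\Delta=0$, and~\eqref{eq:moment_conditions} holds). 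Then $\cQ^n h = h\circ T^n$ and $\|\cQ^n h\|_\infty = \|h\|_\infty$ for every $n$. With $g(x)=\sin 2\pi x$ and $h = g - g\circ T$ the degeneracy hypothesis is satisfied, yet the partial sums $\sum_{n=0}^{N}\cQ^n h(x) = g(x) - g(2^{N+1}x)$ diverge for Lebesgue--a.e.\ $x$. Your ``coupling a chain started at a point with one started from $\mu$'' cannot work here: the trajectory of a point is deterministic, its law a Dirac mass, and it never approaches $\mu$ in any norm. The paper's coupling (and Theorems~\ref{thm:seq_bounds}--\ref{thm:bounds}) are for absolutely continuous initial measures with H\"older densities, not for point masses. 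A secondary issue is that the ``at-most-exponential'' bound on $|\cQ^n h|_\alpha$ would require a moment of $\sup|T_{\omega_1}'|$, which~\eqref{eq:moment_conditions} does not provide.

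The paper circumvents this by using the hypothesis from the outset rather than afterwards. From $\bfv^\rT\bfSigma^2\bfv=0$ and the correlation bound one gets $\sup_n\|S_n\|_{L^2(P^\mu)}<\infty$; Banach--Alaoglu then produces $g\in L^2$ as (the $x$-marginal of) a weak subsequential limit of $S_n$, and a short telescoping argument using~\eqref{eq:mix} yields $f_\bfv = g-\cQ g$. The upgrade to the pointwise identity $f_\bfv(x) = g(x) - g(T_{\omega_1}x)$ is exactly your martingale computation, and H\"older regularity of $g$ then comes from Liv\v{s}ic rigidity applied to a \emph{single} map $T_{\omega_1}$. Your ``if'' direction and the martingale step are fine; what is missing is precisely this $L^2$-compactness construction of $g$ and the Liv\v{s}ic regularity upgrade in place of the divergent series.
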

The preceding lemma places a serious obstruction to degeneracy. In particular, up to a negligible set of $\omega$'s,
\beqn
\sum_{k=0}^{p-1} \bfv^\rT\bff(T_{\omega_k}\circ\dots\circ T_{\omega_1}(x)) = 0
\eeqn
whenever $T_{\omega_p}\circ\dots\circ T_{\omega_1}(x) = x$ (periodic trajectory). Hence, having a degenerate covariance matrix $\bfSigma^2$ amounts to a very exceptional choice of~$\bff$.


\medskip
\section{Proof of Theorem~\ref{thm:stationary}}\label{sec:stationary}

The strategy of proving Theorem~\ref{thm:stationary} is to find $\phi$ as an accumulation point of $\bigl(n^{-1}\sum_{k=0}^{n-1}\cP^k\bfone\bigr)_{n\ge 1}$ by showing that $\|\cP^n \bfone\|_{C^1}$ is uniformly bounded for $n\ge 0$.

\medskip

Given a sequence $(\omega_i)_{i\ge 1}$, denote 
\beq\label{eq:SR}
S_n = \prod_{i=1}^n  \lambda_{\omega_i}^{-1}\quad\text{and}\quad R_n = \sum_{i=1}^n \Delta_{\omega_i} \prod_{j=i+1}^n \lambda_{\omega_j}^{-1}.
\eeq

\begin{lem}\label{lem:L_prod_diff}
For any $C^1$-function $\psi$, the bound
\beqn
 |(\cL_{\omega_n}\dots\cL_{\omega_1} \psi)'| \le S_n \cdot \cL_{\omega_n}\cdots\cL_{\omega_1} |\psi'| + R_n\cdot \cL_{\omega_n}\cdots\cL_{\omega_1} |\psi|
 \eeqn
holds.
\end{lem}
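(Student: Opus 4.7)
I would prove the lemma by induction on $n$. For the base case $n=1$, I start from the defining sum $\cL_{\omega_1}\psi(x)=\sum_{y\in T_{\omega_1}^{-1}\{x\}}\psi(y)/|T_{\omega_1}'(y)|$. Because $T_{\omega_1}$ has no critical points, each inverse branch $y=y(x)$ is smooth with $dy/dx=1/T_{\omega_1}'(y)$, so differentiating term by term and applying the triangle inequality yields
\beqn
|(\cL_{\omega_1}\psi)'(x)|\le \sum_y \frac{|\psi'(y)|}{T_{\omega_1}'(y)^2}+\sum_y \frac{|\psi(y)||T_{\omega_1}''(y)|}{|T_{\omega_1}'(y)|^3}.
\eeqn
Pulling the factor $1/|T_{\omega_1}'(y)|\le \lambda_{\omega_1}^{-1}$ out of the first sum and the factor $|T_{\omega_1}''(y)|/T_{\omega_1}'(y)^2\le\Delta_{\omega_1}$ out of the second, what remains in each case is precisely $\cL_{\omega_1}|\psi'|(x)$ or $\cL_{\omega_1}|\psi|(x)$. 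This gives the claim with $S_1=\lambda_{\omega_1}^{-1}$ and $R_1=\Delta_{\omega_1}$.

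For the inductive step, set $\varphi=\cL_{\omega_{n-1}}\cdots\cL_{\omega_1}\psi$ and apply the base case to $\cL_{\omega_n}\varphi$:
\beqn
|(\cL_{\omega_n}\varphi)'|\le \lambda_{\omega_n}^{-1}\cL_{\omega_n}|\varphi'|+\Delta_{\omega_n}\cL_{\omega_n}|\varphi|.
\eeqn
The induction hypothesis bounds $|\varphi'|$ by $S_{n-1}\cL_{\omega_{n-1}}\cdots\cL_{\omega_1}|\psi'|+R_{n-1}\cL_{\omega_{n-1}}\cdots\cL_{\omega_1}|\psi|$, and because each $\cL_{\omega_i}$ is a positive linear operator this inequality is preserved under application of $\cL_{\omega_n}$. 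For the zeroth-order term I use the elementary fact $|\varphi|\le \cL_{\omega_{n-1}}\cdots\cL_{\omega_1}|\psi|$ (valid since $|\cL g|\le \cL|g|$ for any positive operator). Substituting and collecting, the coefficient of $\cL_{\omega_n}\cdots\cL_{\omega_1}|\psi'|$ becomes $\lambda_{\omega_n}^{-1}S_{n-1}=S_n$, while the coefficient of $\cL_{\omega_n}\cdots\cL_{\omega_1}|\psi|$ becomes $\lambda_{\omega_n}^{-1}R_{n-1}+\Delta_{\omega_n}$, which is exactly $R_n$ by the definition in~\eqref{eq:SR} (splitting the sum at $i=n$).

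The only mild care needed is the sign handling in the base case, since $T_{\omega_1}'$ need not be positive; but working with $|T_{\omega_1}'|$ in the denominator throughout, and using that the sign of $T_{\omega_1}'$ is constant on each branch (no critical points), makes the differentiation and subsequent estimates uniform in sign. I do not anticipate any real obstacle: the proof is a clean induction whose engine is the product-rule computation of the base case and the positivity of the transfer operators, which allows the inductive bound to propagate through one more application of $\cL_{\omega_n}$.
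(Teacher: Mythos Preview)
Your proof is correct and follows essentially the same approach as the paper: establish the single-step bound $|(\cL_{\omega_1}\psi)'|\le \lambda_{\omega_1}^{-1}\cL_{\omega_1}|\psi'|+\Delta_{\omega_1}\cL_{\omega_1}|\psi|$ and then iterate, using positivity of the transfer operators together with the recursions $S_n=\lambda_{\omega_n}^{-1}S_{n-1}$ and $R_n=\lambda_{\omega_n}^{-1}R_{n-1}+\Delta_{\omega_n}$. The paper's proof simply asserts the base case as ``straightforward'' and says ``iterating this bound yields the claim,'' which is exactly what you have spelled out in detail.
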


\begin{proof}
The straightforward bound
\beqn
|(\cL_{\omega_1}\psi)'| \leq \lambda_{\omega_1}^{-1}\cL_{\omega_1}|\psi'|+\Delta_{\omega_1}\cL_{\omega_1}|\psi|, \qquad \omega_1\in\Omega,
\eeqn
holds for a $C^1$-function $\psi$. Iterating this bound yields the claim.
\end{proof} 
First of all, Lemma~\ref{lem:L_prod_diff} implies
\beqn
 \|(\cL_{\omega_n}\dots\cL_{\omega_1} \psi)'\|_{L^1(\fm)} \le S_n\|\psi'\|_{L^1(\fm)} + R_n\|\psi\|_{L^1(\fm)},
\eeqn
because $\cL_{\omega_i}$ is a contraction in $L^1(\fm)$. Setting $\psi = \bfone$ yields
$
 \|(\cL_{\omega_n}\dots\cL_{\omega_1} \bfone)'\|_{L^1(\fm)} \le R_n.
$
Thus,
\beq\label{eq:L_prod_sup}
\|\cL_{\omega_n}\cdots\cL_{\omega_1} \bfone\|_\infty \le 1+\|(\cL_{\omega_n}\cdots\cL_{\omega_1} \bfone)'\|_{L^1(\fm)} \le 1+R_n,
\eeq
because $\cL_{\omega_n}\cdots\cL_{\omega_1} \bfone$ is a $C^1$ probability density. Another consequence of Lemma~\ref{lem:L_prod_diff} is
\beqn
\|(\cL_{\omega_n}\cdots\cL_{\omega_1} \bfone)'\|_\infty \le R_n \|\cL_{\omega_n}\cdots\cL_{\omega_1} \bfone\|_\infty \le R_n(1+R_n).
\eeqn
In particular,
\beqn
\|\cP^n \bfone\|_{C^1} \le \bE[(1+R_n)^2].
\eeqn

\begin{lem}\label{lem:R^2_bound}
There exists such a constant $C_R>0$ that
\beqn
\sup_{n\ge 1} \bE[(1+R_n)^2] \le C_R.
\eeqn
\end{lem}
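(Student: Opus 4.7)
The plan is to estimate $\|R_n\|_{L^2(\bP)}$ via the Minkowski inequality, exploiting the product structure in the definition~\eqref{eq:SR} of $R_n$ together with the independence of the $\omega_j$'s. Since $R_n\geq 0$, Cauchy--Schwarz gives $\bE[R_n]\leq \|R_n\|_2$, hence $\bE[(1+R_n)^2] \leq (1+\|R_n\|_2)^2$, and it suffices to bound $\|R_n\|_2$ uniformly in $n$.

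Applying Minkowski's inequality to the defining sum yields
\beqn
\|R_n\|_2 \leq \sum_{i=1}^n \left\| \Delta_{\omega_i}\prod_{j=i+1}^n \lambda_{\omega_j}^{-1}\right\|_2 .
\eeqn
By the independence of the $\omega_j$'s under $\bP=\eta^\infty$, each summand factorizes:
\beqn
\left\| \Delta_{\omega_i}\prod_{j=i+1}^n \lambda_{\omega_j}^{-1}\right\|_2^2 = \bE\!\left[\Delta_{\omega_i}^2\prod_{j=i+1}^n \lambda_{\omega_j}^{-2}\right] = \langle\Delta^2\rangle\,\langle\lambda^{-2}\rangle^{n-i}.
\eeqn
Writing $\rho = \langle\lambda^{-2}\rangle$, so that $\rho<1$ by the standing assumption~\eqref{eq:moment_conditions}, we get
\beqn
\|R_n\|_2 \leq \sqrt{\langle\Delta^2\rangle}\,\sum_{i=1}^n \rho^{(n-i)/2} \leq \frac{\sqrt{\langle\Delta^2\rangle}}{1-\sqrt{\rho}},
\eeqn
a bound independent of $n$. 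Combining this with the earlier observation gives
\beqn
\bE[(1+R_n)^2] \leq \left(1+\frac{\sqrt{\langle\Delta^2\rangle}}{1-\sqrt{\rho}}\right)^{\!2} =: C_R,
\eeqn
which is the desired estimate.

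The only place where anything could go wrong is in controlling the geometric sum over~$i$, which is precisely what the first of the two moment conditions in~\eqref{eq:moment_conditions} is designed to ensure; the second condition makes $\sqrt{\langle\Delta^2\rangle}$ finite. No delicate cancellations or sequence-wise control are needed, as the crude triangle bound together with independence already extracts a geometric factor for each term.
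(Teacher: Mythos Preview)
Your proof is correct and is in fact a bit cleaner than the paper's. The paper expands $R_n^2$ directly into a diagonal sum $\sum_i \Delta_{\omega_i}^2\prod_{j>i}\lambda_{\omega_j}^{-2}$ and an off-diagonal sum $2\sum_{i<\ell}\Delta_{\omega_i}\Delta_{\omega_\ell}\lambda_{\omega_\ell}^{-1}\prod_{i<j<\ell}\lambda_{\omega_j}^{-1}\prod_{k>\ell}\lambda_{\omega_k}^{-2}$, then takes expectations, uses independence, and invokes $\langle\lambda^{-1}\rangle\le\langle\lambda^{-2}\rangle^{1/2}$ and $\langle\lambda^{-1}\Delta\rangle\le\langle\lambda^{-2}\rangle^{1/2}\langle\Delta^2\rangle^{1/2}$ to reduce everything to geometric series in $\langle\lambda^{-2}\rangle$. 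You sidestep the cross-term bookkeeping entirely by applying Minkowski in $L^2$ first and only then using independence; this collapses the argument to a single geometric sum. Both routes use exactly the same hypotheses~\eqref{eq:moment_conditions} and yield bounds depending only on $\langle\lambda^{-2}\rangle$ and $\langle\Delta^2\rangle$; your version is just shorter and avoids the double sum. (Incidentally, the step $\bE[(1+R_n)^2]\le(1+\|R_n\|_2)^2$ is simply Minkowski again applied to $1+R_n$, so you need not separately invoke Cauchy--Schwarz for the first moment.)
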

\begin{proof}
By Jensen's inequality, it is enough to check that $\bE[R_n^2]$ is uniformly bounded.
But
\beqn
\begin{split}
R_n^2 &= \sum_{i=1}^n\sum_{\ell=1}^n \Delta_{\omega_i}\Delta_{\omega_\ell} \prod_{j=i+1}^n \lambda_{\omega_j}^{-1} \prod_{k=\ell+1}^n \lambda_{\omega_k}^{-1}
\\
& = \sum_{i=1}^n \Delta_{\omega_i}^2 \prod_{j=i+1}^n \lambda_{\omega_j}^{-2} + 2\sum_{1\le i<\ell\le n} \Delta_{\omega_i}\Delta_{\omega_\ell} \prod_{j=i+1}^n \lambda_{\omega_j}^{-1} \prod_{k=\ell+1}^n \lambda_{\omega_k}^{-1}
\\
& = \sum_{i=1}^n \Delta_{\omega_i}^2 \prod_{j=i+1}^n \lambda_{\omega_j}^{-2} + 2\sum_{1\le i<\ell\le n} \Delta_{\omega_i}\Delta_{\omega_\ell}\lambda_{\omega_\ell}^{-1} \prod_{j=i+1}^{\ell-1} \lambda_{\omega_j}^{-1} \prod_{k=\ell+1}^n \lambda_{\omega_k}^{-2}.
\end{split}
\eeqn
Therefore,
\beqn
\bE[R_n^2] = \langle\Delta^2\rangle\sum_{i=1}^n   \langle\lambda^{-2}\rangle^{n-i} + 2\langle\Delta\rangle\,\langle\lambda^{-1}\Delta\rangle \sum_{1\le i<\ell\le n}  \langle\lambda^{-1}\rangle^{\ell-1-i} \, \langle\lambda^{-2}\rangle^{n-\ell}.
\eeqn
Here
\beqn
\langle\lambda^{-1}\rangle \le \langle\lambda^{-2}\rangle^{1/2},\quad \langle\Delta\rangle \le \langle\Delta^2\rangle^{1/2} \quad\text{and}\quad  \langle\lambda^{-1}\Delta\rangle \le \langle\lambda^{-2}\rangle^{1/2}  \langle\Delta^2\rangle^{1/2}
\eeqn
by Jensen's and Hölder's inequalities. Thus, by~\eqref{eq:moment_conditions},
\beqn
\sum_{1\le i<\ell\le n}  \langle\lambda^{-1}\rangle^{\ell-1-i} \, \langle\lambda^{-2}\rangle^{n-\ell} 
= \sum_{\ell=2}^n \left(\sum_{i=1}^{\ell-1} \langle\lambda^{-1}\rangle^{\ell-1-i} \right) \langle\lambda^{-2}\rangle^{n-\ell} 
\le
 \frac{1}{1-\langle\lambda^{-1}\rangle} \frac{1}{1-\langle\lambda^{-2}\rangle} < \infty.
\eeqn
This proves the lemma.
\end{proof}

It is standard that the existence of a Lipschitz continuous stationary distribution as an accumulation point of the sequence $(n^{-1}\sum_{k=0}^{n-1}\cP^k\bfone)_{n\ge 1}$ follows by a compactness argument (see, e.g.,~\cite{Sulku_2012}). The distribution is strictly positive. To see this, first observe that $\phi>0$ on an arc $I\subset\bS$. Also, there exists a $\bar\lambda>1$ such that $\eta\bigl(\omega_1\in\Omega\,:\,\lambda_{\omega_1}\ge \bar\lambda\bigr)>0$. Thus, we have~$\cP^n(\phi |_I)>0$ for some sufficiently large $n$. Now $\phi = \cP^n(\phi) \ge \cP^n(\phi |_I)>0$. 

\medskip
The proof of Theorem~\ref{thm:stationary} is now complete.
\qed


\medskip
\section{Proofs of Theorems~\ref{thm:seq_bounds} and~\ref{thm:bounds}}

\subsection{Regularity of push-forward densities}
The following distortion estimate is standard. It will be needed for controlling the regularity of the push-forward distributions under the dynamics.
\begin{lem}
Let $n\in\bN$ be arbitrary. For any $x,y\in\bS$,
\beq
e^{-R_n d(x,y)}\leq \frac{(T_{\omega_n}\circ\cdots\circ T_{\omega_1})'((T_{\omega_n}\circ\cdots\circ T_{\omega_1})^{-1}_ix)}{(T_{\omega_n}\circ\cdots\circ T_{\omega_1})'((T_{\omega_n}\circ\cdots\circ T_{\omega_1})^{-1}_iy)} \leq e^{R_n d(x,y)},
\eeq
Here $R_n$ is as defined earlier and $(T_{\omega_n}\circ\cdots\circ T_{\omega_1})^{-1}_i$ is the $i$th branch of the inverse of $T_{\omega_n}\circ\cdots \circ T_{\omega_1}$ on a given arc $J\subset \bS$ of length $|J|\leq \frac12$ containing both $x$ and $y$.
\end{lem}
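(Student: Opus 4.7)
The plan is to take logarithms of both sides of the claimed inequality and reduce the problem to bounding
\[
\bigl|\log(T_{\omega_n}\circ\cdots\circ T_{\omega_1})'(u)-\log(T_{\omega_n}\circ\cdots\circ T_{\omega_1})'(v)\bigr|
\]
by $R_n d(x,y)$, where $u = (T_{\omega_n}\circ\cdots\circ T_{\omega_1})^{-1}_ix$ and $v$ is defined analogously at $y$. Once this is established the claim follows by exponentiating. Denote $u_0=u$, $v_0=v$ and $u_k=T_{\omega_k}\circ\cdots\circ T_{\omega_1}(u)$, $v_k=T_{\omega_k}\circ\cdots\circ T_{\omega_1}(v)$, so that $u_n=x$ and $v_n=y$. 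By the chain rule,
\[
\log(T_{\omega_n}\circ\cdots\circ T_{\omega_1})'(u)-\log(T_{\omega_n}\circ\cdots\circ T_{\omega_1})'(v)
= \sum_{i=1}^n\bigl[\log T_{\omega_i}'(u_{i-1})-\log T_{\omega_i}'(v_{i-1})\bigr],
\]
so it suffices to estimate each term on the right.

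For each $i$, I would pass to the local inverse branch $F_i$ of $T_{\omega_i}$ on a suitable short arc, writing $u_{i-1}=F_i(u_i)$ and $v_{i-1}=F_i(v_i)$. A direct calculation using $F_i'(z)=1/T_{\omega_i}'(F_i(z))$ gives
\[
\bigl(\log|F_i'|\bigr)'(z) = -\frac{T_{\omega_i}''(F_i(z))}{\bigl(T_{\omega_i}'(F_i(z))\bigr)^{2}},
\]
whose absolute value is bounded by $\Delta_{\omega_i}$ by definition. Consequently, by the mean value theorem,
\[
\bigl|\log T_{\omega_i}'(u_{i-1})-\log T_{\omega_i}'(v_{i-1})\bigr|
=\bigl|\log|F_i'|(u_i)-\log|F_i'|(v_i)\bigr|
\le \Delta_{\omega_i}\,d(u_i,v_i).
\]
This is the step where $\Delta_{\omega_i}$, rather than $\|T_{\omega_i}''/T_{\omega_i}'\|_\infty$, enters naturally, and it is the one nontrivial identity to verify.

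To control $d(u_i,v_i)$, I would apply the mean value theorem in the opposite direction to the remaining composition $T_{\omega_n}\circ\cdots\circ T_{\omega_{i+1}}$, which sends $u_i$ to $x$ and $v_i$ to $y$ monotonically on an arc (guaranteed by the hypothesis $|J|\le\frac12$ propagated backwards through the injective inverse branches). Since its derivative is at least $\prod_{j=i+1}^n\lambda_{\omega_j}$, one gets
\[
d(u_i,v_i)\le d(x,y)\prod_{j=i+1}^n\lambda_{\omega_j}^{-1}.
\]
Summing the per-step bounds,
\[
\sum_{i=1}^n\bigl|\log T_{\omega_i}'(u_{i-1})-\log T_{\omega_i}'(v_{i-1})\bigr|
\le d(x,y)\sum_{i=1}^n \Delta_{\omega_i}\prod_{j=i+1}^n\lambda_{\omega_j}^{-1}
= R_n\,d(x,y),
\]
which is exactly the desired bound.

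The main obstacle is essentially bookkeeping: making sure the local inverse branches $F_i$ are all well-defined on arcs containing $u_i$ and $v_i$, so that the mean value theorem is legitimate at every stage. The hypothesis $|J|\le\frac12$ and the fact that the $T_{\omega_i}$ have no critical points ensure that pulling $J$ back through $T_{\omega_i}^{-1}$ stays within an injectivity arc, and forward iterates starting from the pulled-back arc map monotonically back to $J$, so the estimates go through without complication.
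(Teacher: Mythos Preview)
Your proof is correct and follows essentially the same approach as the paper: take logarithms, split via the chain rule into a sum over $i$, bound each summand by $\Delta_{\omega_i}\,d(u_i,v_i)$ using the identity $(\log|F_i'|)'=-T_{\omega_i}''\circ F_i/(T_{\omega_i}'\circ F_i)^2$, and then control $d(u_i,v_i)$ by $d(x,y)\prod_{j>i}\lambda_{\omega_j}^{-1}$. The paper's argument is identical up to notation (it writes $x_{-n+i},y_{-n+i}$ for your $u_i,v_i$ and $S_{\omega_i}$ for your $F_i$).
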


\begin{proof}
For brevity, let $x_{-n+i}$ and $y_{-n+i}$ denote the preimages of $x$ and $y$, respectively, along the same branch of the inverse of $T_{\omega_n}\circ\cdots\circ T_{\omega_{i+1}}$. Note that
\beqn
(T_{\omega_n}\circ\cdots\circ T_{\omega_1})'(x_{-n}) = \prod_{i=1}^n T_{\omega_i}'\circ S_{\omega_i} (x_{-n+i}),
\eeqn
where $S_{\omega_i}$ stands for an appropriate inverse branch of $T_{\omega_i}$. Hence,
\begin{align*}
& \log \frac{(T_{\omega_n}\circ\cdots\circ T_{\omega_1})'(x_{-n})}{(T_{\omega_n}\circ\cdots\circ T_{\omega_1})'(y_{-n})} \leq |\log((T_{\omega_n}\circ\cdots\circ T_{\omega_1})'(x_{-n}))-\log((T_{\omega_n}\circ\cdots\circ T_{\omega_1})'(y_{-n}))| \\
&
\qquad\qquad\qquad \leq \sum^{n}_{i=1}|\log T_{\omega_i}'\circ S_{\omega_i} (x_{-n+i})-\log T_{\omega_i}'\circ S_{\omega_i} (y_{-n+i})| \\
&
\qquad\qquad\qquad \leq \sum^{n}_{i=1} \|(\log T_{\omega_i}'\circ S_{\omega_i})' \|_\infty \,d(x_{-n+i},y_{-n+i}) 
\leq \sum^{n}_{i=1}\Delta_{\omega_i} \, d(x_{-n+i},y_{-n+i}) \\
&
\qquad\qquad\qquad \leq \sum^{n}_{i=1}\Delta_{\omega_i} \prod_{j=i+1}^n\lambda_{\omega_j}^{-1} \cdot d(x,y)
= R_n d(x,y).
\end{align*}
A similar estimate is obtained by interchanging $x$ and $y$, which proves the claim. 
\end{proof}

\begin{prop}
\label{prop:Hoelder_estimate}
Suppose $\psi$ is a strictly positive probability density and that $\log\psi \in C^\alpha$.
Then $\cL_{\omega_n}\cdots\cL_{\omega_1}\psi$ inherits these properties for every $n\in\bN$ and 
\beqn
\abs{\log\cL_{\omega_n}\cdots\cL_{\omega_1}\psi}_\alpha \leq S_{n}^\alpha \abs{\log\psi}_\alpha + R_n,
\eeqn
where $R_n$ and $S_n$ have been defined earlier.
\end{prop}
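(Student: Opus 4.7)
The plan is to bound $\bigl|\log\cL_{\omega_n}\cdots\cL_{\omega_1}\psi(x)-\log\cL_{\omega_n}\cdots\cL_{\omega_1}\psi(y)\bigr|$ directly from the explicit transfer-operator formula, rather than by iterating a one-step estimate. Writing $T^n = T_{\omega_n}\circ\cdots\circ T_{\omega_1}$, I fix $x,y\in\bS$ and label the inverse branches $\Ti$ coherently along the shorter arc joining $x$ and~$y$; such a labeling exists because $T_{\omega_i}'$ vanishes nowhere. Since $\psi>0$ and $(T^n)'$ has no zeros,
\begin{equation*}
\cL_{\omega_n}\cdots\cL_{\omega_1}\psi(z) = \sum_i \frac{\psi(\Ti z)}{|(T^n)'(\Ti z)|},\qquad z\in\{x,y\},
\end{equation*}
is a sum of strictly positive terms, so $\log\cL_{\omega_n}\cdots\cL_{\omega_1}\psi$ is well-defined and continuous; both the strict positivity and the Hölder regularity claimed in the proposition will drop out of the argument below.

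The first step is the elementary mediant inequality: if $a_i,b_i>0$, then $\sum_i a_i/\sum_i b_i$ lies between $\min_i a_i/b_i$ and $\max_i a_i/b_i$. Applied to the matched summands at $x$ and $y$ and followed by a triangle inequality inside the logarithm, this yields
\begin{equation*}
\bigl|\log\cL_{\omega_n}\cdots\cL_{\omega_1}\psi(x)-\log\cL_{\omega_n}\cdots\cL_{\omega_1}\psi(y)\bigr|\le \max_i \bigl|\log\psi(\Ti x)-\log\psi(\Ti y)\bigr| + \max_i \bigl|\log|(T^n)'(\Ti x)|-\log|(T^n)'(\Ti y)|\bigr|.
\end{equation*}
For the $\psi$-piece, the lower bound $|(T^n)'|\ge \prod_i \lambda_{\omega_i}=S_n^{-1}$ implies $d(\Ti x,\Ti y)\le S_n d(x,y)$, hence
\begin{equation*}
\bigl|\log\psi(\Ti x)-\log\psi(\Ti y)\bigr|\le |\log\psi|_\alpha\, S_n^\alpha\, d(x,y)^\alpha.
\end{equation*}
For the Jacobian piece, I would invoke the distortion lemma just established, which gives directly $R_n\,d(x,y)$.

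Combining the two estimates and using $d(x,y)\le 1$ together with $\alpha<1$ to absorb the linear $d(x,y)$ into $d(x,y)^\alpha$ yields exactly the claimed inequality. The only mildly subtle point in the whole argument is the coherent labeling of inverse branches at two distinct base points; this is handled by restricting attention to the short arc between $x$ and $y$ (the circle has diameter $\tfrac12$), and no genuine obstacle is anticipated. The strict positivity of $\cL_{\omega_n}\cdots\cL_{\omega_1}\psi$ is visible from the sum representation, and its Hölder continuity follows from the Hölder continuity of its logarithm combined with the boundedness of the density.
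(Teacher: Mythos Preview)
Your argument is correct and is essentially identical to the paper's own proof: the paper bounds $\bigl|\log\bigl(\psi_{n,i}(x)/\psi_{n,i}(y)\bigr)\bigr|$ for each inverse branch by $S_n^\alpha|\log\psi|_\alpha\,d(x,y)^\alpha + R_n\,d(x,y)$ and then sums the inequalities $e^{-B_n}\psi_{n,i}(y)\le\psi_{n,i}(x)\le e^{B_n}\psi_{n,i}(y)$, which is exactly your mediant step phrased differently. The branch-labeling issue is handled in the paper the same way you suggest, by restricting to an arc of length at most~$\tfrac12$.
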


\begin{proof}
Let $J\subset \bS$ be an arc with $|J|\leq \frac12$.
Given an initial probability density $\psi$, we introduce the notation
\beqn
\psi_{n,i}(x) = \frac{\psi((T_{\omega_n}\circ\cdots\circ T_{\omega_1})^{-1}_i x)}{(T_{\omega_n}\circ\cdots\circ T_{\omega_1})'((T_{\omega_n}\circ\cdots\circ T_{\omega_1})^{-1}_i x)} \ ,\qquad x\in J,
\eeqn
Then
\beqn
\cL_{\omega_n}\cdots\cL_{\omega_1}\psi(x) = \sum_{i=1}^{w} \psi_{n,i}(x),
\eeqn
where $w$ is the number of inverse branches of $T_{\omega_n}\circ\cdots \circ T_{\omega_1}$ on $J$. Next, let $x,y\in\bS$ be arbitrary. Without loss of generality, we may assume both points belong to $J$. Therefore,
\beqn
\begin{split}
\left|\log\frac{\psi_{n,i}(x)}{\psi_{n,i}(y)}\right| 
&
\leq \left|\log\frac{\psi((T_{\omega_n}\circ\cdots\circ T_{\omega_1})^{-1}_i x)}{\psi((T_{\omega_n}\circ\cdots\circ T_{\omega_1})^{-1}_i y)}\right| + \left|\log\frac{(T_{\omega_n}\circ\cdots\circ T_{\omega_1})'((T_{\omega_n}\circ\cdots\circ T_{\omega_1})^{-1}_i y )}{(T_{\omega_n}\circ\cdots\circ T_{\omega_1})'((T_{\omega_n}\circ\cdots\circ T_{\omega_1})^{-1}_i x)}\right|
\\
&
\leq |\log\psi|_\alpha \,d((T_{\omega_n}\circ\cdots\circ T_{\omega_1})^{-1}_i x, (T_{\omega_n}\circ\cdots\circ T_{\omega_1})^{-1}_i y)^\alpha + R_n d(x,y)
\\
&
\leq (S_{n}^\alpha |\log\psi|_\alpha \,+ R_n) d(x,y)^\alpha.
\end{split}
\eeqn
For brevity, denote $B_n=(S_{n}^\alpha |\log\psi|_\alpha \,+ R_n) d(x,y)^\alpha$. Then
\beqn
e^{-B_n}\psi_{n,i}(y) \leq \psi_{n,i}(x)\leq e^{B_n}\psi_{n,i}(y).
\eeqn
Summing over $i$, we get
\beqn
e^{-B_n}\cL_{\omega_n}\cdots\cL_{\omega_1}\psi(y) \leq \cL_{\omega_n}\cdots\cL_{\omega_1}\psi(x)\leq e^{B_n}\cL_{\omega_n}\cdots\cL_{\omega_1}\psi(y).
\eeqn
Taking logarithms yields the desired bound.
\end{proof}

\medskip
Before proceeding, we prove the lower bound alluded to below Theorem~\ref{thm:stationary} on the stationary density~$\phi$ in terms of system constants:
\begin{cor}\label{cor:stationary_lowerbound}
There exists a constant $c>0$, depending only on the moments appearing in~\eqref{eq:moment_conditions}, for which
\beqn
\inf \phi \ge c.
\eeqn
\end{cor}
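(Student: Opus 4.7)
The plan is to establish a uniform pointwise lower bound on~$\cP^n\bfone$ depending only on the moments in~\eqref{eq:moment_conditions}, and then transfer it to~$\phi$ via the Cesàro-average construction used in Section~\ref{sec:stationary}.

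First, I would apply Proposition~\ref{prop:Hoelder_estimate} with $\psi=\bfone$: since $|\log\bfone|_\alpha=0$, the pushed-forward density $\rho_n:=\cL_{\omega_n}\cdots\cL_{\omega_1}\bfone$ satisfies $|\log\rho_n|_\alpha\le R_n$ for any fixed $\alpha\in(0,1)$. Because $\rho_n$ is a probability density on the unit-length circle, there exists a point $y\in\bS$ with $\rho_n(y)\ge 1$, so $\log\rho_n(y)\ge 0$. The intrinsic diameter of~$\bS$ is $1/2$, so the Hölder bound gives $\log\rho_n(x)\ge -R_n\cdot 2^{-\alpha}$ for every $x$, i.e., the sample-wise bound
\beqn
\rho_n(x)\ge e^{-R_n/2^\alpha},\qquad x\in\bS.
\eeqn

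Next, I would take expectations and apply Jensen's inequality to the convex function $r\mapsto e^{-r/2^\alpha}$ to obtain $\cP^n\bfone(x)=\bE[\rho_n(x)]\ge e^{-\bE[R_n]/2^\alpha}$. From~\eqref{eq:SR} and independence, $\bE[R_n]=\langle\Delta\rangle\sum_{i=1}^n\langle\lambda^{-1}\rangle^{n-i}\le\langle\Delta\rangle/(1-\langle\lambda^{-1}\rangle)$, which in turn is dominated by $M:=\langle\Delta^2\rangle^{1/2}/(1-\langle\lambda^{-2}\rangle^{1/2})$ by the Hölder/Jensen inequalities already used in Lemma~\ref{lem:R^2_bound} together with the standing assumption~\eqref{eq:moment_conditions}. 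Hence $c:=e^{-M/2^\alpha}\le 1$ is an $n$-independent lower bound: $\cP^n\bfone\ge c$ for all $n\ge 1$, and trivially $\cP^0\bfone=\bfone\ge c$ as well. Consequently every Cesàro average $\bar\phi_n:=n^{-1}\sum_{k=0}^{n-1}\cP^k\bfone$ is bounded below by~$c$, and passing to the uniformly convergent subsequence that produced~$\phi$ in Section~\ref{sec:stationary} preserves this inequality, yielding $\inf\phi\ge c$.

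The only genuinely nontrivial step is the use of Jensen's inequality to pass from the random sample-wise bound $e^{-R_n/2^\alpha}$ to the deterministic bound $e^{-\bE[R_n]/2^\alpha}$; everything else is a reassembly of the Hölder control delivered by Proposition~\ref{prop:Hoelder_estimate} together with a moment calculation of the kind already performed in Lemma~\ref{lem:R^2_bound}. In particular, the constant $c$ is explicit in $\langle\lambda^{-2}\rangle$ and $\langle\Delta^2\rangle$, as required.
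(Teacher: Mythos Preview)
Your proof is correct and follows essentially the same route as the paper: derive the sample-wise lower bound $\cL_{\omega_n}\cdots\cL_{\omega_1}\bfone\ge e^{-cR_n}$ from Proposition~\ref{prop:Hoelder_estimate}, apply Jensen to pass to $e^{-c\,\bE[R_n]}$, bound $\bE[R_n]$ uniformly in~$n$ via the moments in~\eqref{eq:moment_conditions}, and then transfer the bound to~$\phi$ through the Ces\`aro construction. The only cosmetic differences are that you retain the diameter factor $2^{-\alpha}$ (the paper simply uses $d(x,y)^\alpha\le 1$) and you compute $\bE[R_n]$ directly rather than invoking Lemma~\ref{lem:R^2_bound} with a second Jensen step.
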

\begin{proof}
Recall that $\phi$ is an accumulation point of the sequence $(n^{-1}\sum_{k=0}^{n-1}\cP^k\bfone)_{n\ge 1}$. From Proposition~\ref{prop:Hoelder_estimate} we get $\log{\cL_{\omega_n}\cdots\cL_{\omega_1}\bfone} \ge -R_n$ for all $n\ge 0$. Applying Jensen's inequality,
\beqn
\cP^n\bfone = \bE(\cL_{\omega_n}\cdots\cL_{\omega_1}\bfone) \ge \bE\bigl(e^{-R_n}\bigr) \ge e^{-\bE(R_n)}.
\eeqn
Lemma~\ref{lem:R^2_bound}, together with another application of Jensen's inequality, finishes the proof.
\end{proof}

\medskip
\subsection{Coupling argument}
We are now ready to explain the coupling step. In what follows, we assume that $\alpha>0$ has been fixed once and for all.

\medskip
We introduce the notation 
\beqn
\cH_K = \{\psi:X\to\bR\text{ a probability density},\, \psi>0,\, |\log\psi|_\alpha \leq K\},
\eeqn
with $K > 0$. The following lemma will turn out useful.

\begin{lem}\label{lem:aK}
Fix any $K > 0$ and set
\begin{align}
\kappa &= \frac{1}{2}\exp(-K) \\
K' &= \exp\left(4K\right).
\end{align}
Then
\begin{enumerate}
\item $\psi \geq 2\kappa > 0$ holds for every $\psi\in\cH_{K}$.
\item $\tilde{\psi} := (\psi-\kappa)/(1-\kappa)\in\cH_{K'}$ for all $\psi\in\cH_{K}$. 
\end{enumerate}
\end{lem}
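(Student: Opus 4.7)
The plan is a direct two-part verification that follows immediately from the H\"older bound on $\log\psi$.

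For~(1), I exploit the continuity of $\psi$ (which is immediate from $\log\psi\in C^\alpha$) together with the fact that $\int_\bS \psi\,\rd\fm = 1$ on a space of unit measure: this forces the existence of a point $x_0\in\bS$ with $\psi(x_0)\geq 1$. Since the circle has diameter $1/2$, and since $d(x,x_0)^\alpha\leq 1$ for $\alpha\in(0,1)$, the seminorm bound $|\log\psi|_\alpha\leq K$ yields
\beqn
\log\psi(x)\;\geq\;\log\psi(x_0)\,-\,K\,d(x,x_0)^\alpha\;\geq\;-K
\eeqn
for every $x\in\bS$, so $\psi\geq e^{-K}=2\kappa$.

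For~(2), the probability-density property is automatic: $\int\tilde\psi\,\rd\fm=(1-\kappa)/(1-\kappa)=1$, and by part~(1) we have $\psi-\kappa\geq\kappa>0$, so $\tilde\psi>0$. The only real content is the H\"older bound on $\log\tilde\psi$. Since the constant $1-\kappa$ makes no contribution to $|\log\tilde\psi|_\alpha$, I reduce the task to bounding $|\log(\psi-\kappa)|_\alpha$ and proceed in three steps. First, the standard estimate $|\log a-\log b|\leq |a-b|/\min(a,b)$ together with the lower bound $\psi-\kappa\geq\kappa$ gives
\beqn
\abs{\log(\psi(x)-\kappa)-\log(\psi(y)-\kappa)}\;\leq\;\kappa^{-1}\,\abs{\psi(x)-\psi(y)}.
\eeqn
Second, from $|\log\psi|_\alpha\leq K$ together with $\min\psi\leq 1$ (because $\psi$ has mean~$1$ on a unit-measure space) I extract the a priori upper bound $\psi\leq e^K$. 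Third, writing $\psi(x)-\psi(y)=\psi(y)\bigl(e^{\log\psi(x)-\log\psi(y)}-1\bigr)$ and using $|e^h-1|\leq |h|e^{|h|}$ with $|h|\leq K\,d(x,y)^\alpha\leq K$ yields the clean estimate $|\psi(x)-\psi(y)|\leq K e^{2K}\,d(x,y)^\alpha$.

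Combining these bounds and substituting $\kappa^{-1}=2e^K$ produces
\beqn
|\log\tilde\psi|_\alpha\;\leq\;\frac{K e^{2K}}{\kappa}\;=\;2K\,e^{3K}.
\eeqn
To match the stated constant $K'=\exp(4K)$, the final step is the elementary inequality $2K\leq e^K$, valid for all $K\geq 0$ (the function $K\mapsto e^K-2K$ attains its minimum $2-2\log 2>0$ at $K=\log 2$). This gives $2Ke^{3K}\leq e^{4K}=K'$, completing the proof. There is no genuine obstacle in this argument; the definition of $K'$ is deliberately generous so that the chain of elementary estimates closes without any need for sharpening. What matters for the sequel is not the specific form of $K'$, but only that $K\mapsto K'$ is a fixed, explicit map independent of $\psi$ and of the sequence $\omega$.
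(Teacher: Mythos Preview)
Your proof is correct and follows essentially the same route as the paper's: a mean-value/Lipschitz estimate for $\log(\psi-\kappa)$ combined with the lower bound $\psi-\kappa\geq\kappa$ and an upper bound on $|\psi|_\alpha$ coming from $\|\psi\|_\infty\leq e^K$ and $|\log\psi|_\alpha\leq K$. The paper quotes the slightly sharper inequality $|\psi|_\alpha\leq|\log\psi|_\alpha\,e^{|\log\psi|_\alpha}\leq Ke^K$, arriving at $2Ke^{2K}$ rather than your $2Ke^{3K}$, but both are comfortably below $e^{4K}$, so the difference is immaterial.
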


\begin{proof}
We have the elementary bounds (see \cite{Sulku_2012})
\beqn
\exp(-|\log\psi|_\alpha) \leq \psi(x) \leq \exp(|\log\psi|_\alpha)
\eeqn
and
\beqn
|\psi|_\alpha \leq |\log\psi|_\alpha\exp\left(|\log\psi|_\alpha\right)
\eeqn
for probability densities $\psi$. Thus, for $\psi\in\cH_K$,
\beqn
\psi(x) \geq \exp(-|\log\psi|_\alpha) \geq \exp(-K) = 2\kappa.
\eeqn
Therefore,
\begin{align*}
\left|\log\!\left(\frac{\psi(x)-\kappa}{1-\kappa}\right)-\log\!\left(\frac{\psi(y)-\kappa}{1-\kappa}\right)\right|
&\le \sup_x\frac{1}{\psi(x)-\kappa}|\psi(x)-\psi(y)|
\le \frac{1}{\kappa}|\psi|_\alpha \,d(x,y)^\alpha 
\\
&\leq 2K\exp\left(2K\right)d(x,y)^\alpha
\leq \exp\left(4K\right)d(x,y)^\alpha,
\end{align*}
which proves the lemma.
\end{proof}

From here on, we will assume that $K>0$ is fixed once and for all. (The value of $K$ will be determined later.) This also fixes $\kappa$ and $K'$. 

\medskip
Given a $K''>0$ and a sequence $\omega$, we say that the 

\bigskip

\noindent
{\it Coupling condition $C(K'',\omega,n)$ is satisfied if}
\begin{align} \label{eq:coupling_cond}
S_n^\alpha K''  +
R_n  \leq K.
\end{align}

\bigskip
\noindent This definition is natural, because Proposition~\ref{prop:Hoelder_estimate} implies that 
\beqn
\cL_{\omega_n}\cdots\cL_{\omega_1}\psi\in\cH_{K} \quad \forall\, \psi\in\cH_{K''} \quad \text{if $C(K'',\omega,n)$ is satisfied}.
\eeqn
Let $\psi^1$ and $\psi^2$ be arbitrary densities in $\cH_{K''}$ and suppose $C(K'',\omega,n)$ is satisfied for some $n$. Then, by the above observation and by Lemma~\ref{lem:aK},
\beqn
\psi^i_n \equiv \cL_{\omega_n}\cdots\cL_{\omega_1}\psi^i = \kappa+(1-\kappa)\tilde{\psi}^i_n,
\eeqn
where 
\beq\label{eq:psi^i_n}
\tilde\psi^i_n = (\psi^i_n-\kappa)/(1-\kappa)\in\cH_{K'}.
\eeq
Thus,
\beqn
\|\psi^1_n-\psi^2_n\|_{L^1(\fm)} \leq (1-\kappa)\|\tilde{\psi}^1_n-\tilde{\psi}^2_n\|_{L^1(\fm)}.
\eeqn
In other words, the coupling condition allowed us to ``couple'' a $\kappa$-fraction of the $n$-step push-forwards of the densities $\psi^1$ and $\psi^2$. Obviously, the procedure can be continued inductively, treating $\tilde\psi^i_n$ as the initial densities: since~\eqref{eq:psi^i_n} holds, we can couple a $\kappa$-fraction of the $\tau$-step push-forwards $\cL_{\omega_{n+\tau}}\cdots\cL_{\omega_{n+1}}\tilde\psi^1_n$ and $\cL_{\omega_{n+\tau}}\cdots\cL_{\omega_{n+1}}\tilde\psi^2_n$ assuming that $C(K',\sigma^n\omega,\tau)$ holds, and again the ``normalized remainder densities'' are in $\cH_{K'}$ by~\eqref{eq:psi^i_n}. 

Let us formalize the above procedure.
Given $K''>0$ and $\omega$, define 
\beqn
\tau_0(\omega)=0
\quad
\text{and}
\quad
\tau_1(\omega) = \inf\{n\ge 0 \,:\, \text{$C(K'',\omega,n)$ is satisfied}\},
\eeqn
and
\beqn
\tau_k(\omega) = \inf\{n\ge 0 \,:\, \text{$C(K',\sigma^{\tau_{k-1}(\omega)}\omega,n)$ is satisfied}\}
\eeqn
for all $k\ge 1$. We use here the convention that the infimum of the empty set is $\infty$.
Next, set
\beqn
n_0(\omega) = 0 
\quad
\text{and}
\quad
n_k(\omega) = \sum_{j=1}^k \tau_j(\omega),
\eeqn
for $k\ge 1$.
Now $n_k\in \bN\cup\{\infty\}$ is the time at which the $k$th coupling will occur and $\tau_k\in\bN\cup\{\infty\}$ is the $k$th inter-coupling time. (Both depend on $K''$ and $\omega$, but we suppress this from the notation.) In particular, using the above coupling argument in combination with the $L^1(\fm)$-contractivity of each $\cL_{\omega_i}$ and the obvious fact that $\|\psi^1-\psi^2\|_{L^1(\fm)}\le 2$, we see that
\beqn
\|\psi^1_n-\psi^2_n\|_{L^1(\fm)} \leq 2(1-\kappa)^k\qquad \forall \, n\ge n_k
\eeqn
holds for all pairs $\psi^1,\psi^2\in\cH_{K''}$. Alternatively, writing
\beqn
N_n(\omega) = \max\{k\ge 0 \,:\, n_k(\omega) \le n\}
\eeqn
for the number of couplings by time $n$ for the sequence $\omega$,
\beqn
\|\psi^1_n-\psi^2_n\|_{L^1(\fm)} \leq 2(1-\kappa)^{N_n} \qquad \forall \, n\ge 0 .
\eeqn

In brief, the $L^1$-distance between the densities converges exponentially to zero as a function of the number of couplings that has occurred. To make use of this, it is necessary to study the statistical properties of $N_n$.

\medskip
\subsection{Coupling time analysis}
In this section we analyze the tail behavior of the inter-coupling times~$\tau_k$, and subsequently obtain crucial information about the distribution of~$N_n$. The task will boil down to studying a pair of random difference equations.

\medskip
For notational simplicity, let us write
\beqn
A_n = \lambda_{\omega_n}^{-1}
\quad\text{and}\quad
B_n = \Delta_{\omega_n}.
\eeqn
Given $\alpha\in(0,1)$, Proposition~\ref{prop:Hoelder_estimate} states that
\beqn
\abs{\log\cL_{\omega_n}\cdots\cL_{\omega_1}\psi}_\alpha \leq S_{n}^\alpha \abs{\log\psi}_\alpha + R_n,
\eeqn
where, according to~\eqref{eq:SR},
\beqn
S_n = \prod_{i=1}^n  A_i \quad\text{and}\quad R_n = \sum_{i=1}^n B_i \prod_{j=i+1}^n A_j.
\eeqn
Starting with $\abs{\log\psi}_\alpha = \xi$, we can perform a coupling when $S_n^\alpha\xi+R_n \le K$; see~\eqref{eq:coupling_cond}. Here~$\xi>0$ is an arbitrary initial condition and $K>0$ a large (non-random) constant to be fixed later.

\medskip
Note that $R_n$ and $\tilde S_n = S_n^\alpha\xi$ satisfy the random difference equations
\beqn
\begin{split}
R_n & = A_n R_{n-1} + B_n,\quad n\ge 1,
\end{split}
\eeqn
and
\beqn
\begin{split}
\tilde S_n & = A_n^\alpha \tilde S_{n-1},\quad n\ge 1.
\end{split}
\eeqn
Our objective is to control the random time when the coupling condition
\beqn
Z_n = R_n + \tilde S_n \le K
\eeqn
is first satisfied, given the initial condition 
\beqn
(R_0,\tilde S_0) = (0,\xi).
\eeqn
Indeed,
\beqn
\abs{\log\cL_{\omega_n}\cdots\cL_{\omega_1}\psi}_\alpha\le Z_n
\eeqn
for all $n\ge 0$. This objective is complicated by the fact that $\tilde S_n$ and $R_n$ are \emph{not} independent random variables, and because  --- unlike $(\tilde S_n)_{n\ge 0}$ and $(R_n)_{n\ge 0}$ separately --- the sequence $(Z_n)_{n\ge 0}$ of their sums does not satisfy a simple recursion relation.

To remedy the above situation, we begin with the observation that
\beqn
Z_n \le R_n + \max(\tilde S_n,1) \le R_n + \max(\tilde S_n^{1/\alpha},1) \le R_n + \tilde S_n^{1/\alpha}+1.
\eeqn
Since $\tilde S_n^{1/\alpha} = A_n \tilde S_{n-1}^{1/\alpha}$, the sums
\beqn
L_n = R_n + \tilde S_n^{1/\alpha}
\eeqn
satisfy the random difference equation
\beq\label{eq:RDE_L}
L_n = A_n L_{n-1} + B_n, \quad n\ge 1,
\eeq
with the initial condition
\beqn
L_0 = \xi^{1/\alpha}.
\eeqn
This is of interest because~\eqref{eq:RDE_L} is ``simple'' and because of the dominating property
\beqn
Z_n \le L_n + 1, \quad n\ge 0.
\eeqn
The coupling condition is therefore certainly satisfied if
\beqn
L_n \le K-1.
\eeqn
Thus, let
\beqn
T = \inf\{k\ge 0 \;:\; L_k\le K-1\}
\eeqn
be the first time the Markov chain~$(L_n)_{n\ge 0}$ dips below level~$K-1$. The utility of $T$ to our proof lies in the fact that it dominates the inter-coupling times $\tau_k$ when $\xi$ is chosen properly.

For the following, note that the assumptions in~\eqref{eq:moment_conditions} imply
\beqn
\average{A_1}<1
\quad\text{and}\quad
\average{B_1}<\infty. 
\eeqn
\begin{prop}\label{prop:coupling_time_tail}
Fix any
\beqn
K >  \frac{\average{B_1}}{1-\average{A_1}}+1.
\eeqn
Then
\beqn
q = \average{A_1}+\frac{\average{B_1}}{K-1} < 1.
\eeqn
Starting the Markov chain $(L_n)_{n\ge 0}$ at an arbitrary level $L_0 = \ell>K-1$,
\beqn
P^\ell(T> n) \le \frac{\ell q^n}{K-1}
\eeqn
for all $n\ge 0$. (Here $P^\ell$ is the path measure of $(L_n)_{n\ge 0}$ starting at $\ell$.)
\end{prop}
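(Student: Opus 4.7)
The plan is to combine a one-step drift estimate for the chain $(L_n)_{n\ge 0}$ with Markov's inequality. First I would verify that the advertised rate satisfies $q<1$: writing $q = \average{A_1} + \average{B_1}/(K-1)$, strict inequality $q<1$ is equivalent to $K-1 > \average{B_1}/(1-\average{A_1})$, which is exactly the hypothesis.

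The core ingredient is a drift estimate. Since $A_n$ and $B_n$ are independent of the $\sigma$-algebra $\cG_{n-1}$ generated by $(A_i,B_i)_{i<n}$, a direct computation gives
\beqn
E[L_n \mid \cG_{n-1}] = \average{A_1}\,L_{n-1} + \average{B_1}.
\eeqn
On the event $\{L_{n-1} > K-1\}$, which contains $\{T > n-1\}$, one has $\average{B_1} \le \frac{\average{B_1}}{K-1}\,L_{n-1}$, and hence $E[L_n\mid \cG_{n-1}] \le q L_{n-1}$ there.

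To convert the drift into a tail bound I would apply Markov's inequality. Since $L_n > K-1$ on $\{T>n\}$,
\beqn
P^\ell(T>n) \le \frac{1}{K-1}\,E^\ell\bigl[L_n\bfone_{\{T>n\}}\bigr].
\eeqn
Using $\bfone_{\{T>n\}}\le \bfone_{\{T>n-1\}}$, which is $\cG_{n-1}$-measurable, the tower property combined with the drift estimate yields
\beqn
E^\ell\bigl[L_n\bfone_{\{T>n\}}\bigr] \le E^\ell\bigl[E[L_n\mid\cG_{n-1}]\,\bfone_{\{T>n-1\}}\bigr] \le q\,E^\ell\bigl[L_{n-1}\bfone_{\{T>n-1\}}\bigr].
\eeqn
Iterating this step down to level zero and using $L_0 = \ell > K-1$ (so that $\bfone_{\{T>0\}} = 1$ almost surely) gives $E^\ell[L_n\bfone_{\{T>n\}}]\le q^n\ell$, which is precisely the claimed bound.

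There is no real obstacle here; this is a textbook Markov/drift calculation. The only substantive observation is the stopping-condition-dependent improvement of the effective contraction rate from $\average{A_1}$ to $q$: as long as $T>n-1$, the state $L_{n-1}$ exceeds $K-1$, which is exactly what lets one absorb the additive constant $\average{B_1}$ into the multiplicative term and recover a genuinely contractive recursion.
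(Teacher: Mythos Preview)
Your proof is correct and follows essentially the same approach as the paper: both exploit that while $L_{n-1}>K-1$ the additive term $B_n$ can be absorbed into a multiplicative factor $V_n=A_n+B_n/(K-1)$ with $\average{V_1}=q<1$, and then apply Markov's inequality. The only cosmetic difference is that the paper introduces an explicit dominating chain $\tilde L_n=\ell\prod_{m=1}^n V_m$ and applies Markov's inequality to it, whereas you carry the stopping-time indicator through a conditional-expectation recursion; the arithmetic and the resulting bound are identical.
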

We point out that the number $\frac{\average{B_1}}{1-\average{A_1}}$ appearing in the lemma above is the expected value of the stationary limit distribution of the chain~$(L_n)_{n\ge 0}$; see~\cite{Vervaat_1979}.

\begin{proof}
The key idea of the proof is to dominate the chain $(L_n)_{n\ge 0}$ with another chain $(\tilde L_n)_{n\ge 0}$ whose value decays below level $K-1$ quickly.
Note that we can rewrite~\eqref{eq:RDE_L} as
\beqn
L_m  = U_m L_{m-1}
\eeqn
where
\beqn
U_m = A_m  + \frac{B_m}{L_{m-1}}.
\eeqn
Given $L_0 = \ell>K-1$, we have $T\ge 1$ and $L_{m-1}>K-1$ for all $m\in[1,T]$. Therefore,
\beqn
U_m < A_m  + \frac{B_m}{K-1} = V_m
\eeqn
and
\beqn
L_m \le V_m L_{m-1}, \quad 1\le m\le T.
\eeqn
Defining a Markov chain $(\tilde L_m)_{m\ge 0}$ such that $\tilde L_0 = \ell$ and 
\beqn
\tilde L_m = V_m \tilde L_{m-1}, \quad m\ge 0, 
\eeqn
we have
\beqn
L_m \le \tilde L_m, \quad 0\le m\le T.
\eeqn
That is, $\tilde L_m$ dominates $L_m$ for as long as the chain $(L_m)_{m\ge 0}$ remains above level~$K-1$. In particular, $T> n \;\Rightarrow\; \min_{0\le m\le n}L_m >K-1 \;\Rightarrow\; \tilde L_n>K-1$, so that
\beqn
\begin{split}
P^\ell(T> n) & \le P^\ell(\tilde L_n > K-1).
\end{split}
\eeqn
Since $\tilde L_n = \ell\prod_{m=1}^n V_m$, Markov's inequality now yields
\beqn
\begin{split}
P^\ell(\tilde L_n > K-1) < \frac{1}{K-1} E^\ell(\tilde L_n) = \frac{\ell}{K-1} (E(V_1))^n = \frac{\ell q^n}{K-1},
\end{split}
\eeqn
where
\beqn
q = \average{A_1}+\frac{\average{B_1}}{K-1} < 1
\eeqn
as we assume that $K>\frac{\average{B_1}}{1-\average{A_1}}+1$. 
\end{proof}

\medskip
\begin{prop}\label{prop:N_n}
Let $K>0$ be as in Proposition~\ref{prop:coupling_time_tail}, and $\alpha\in(0,1)$ and $K''>0$ be given.
There exist such constants $t\in(0,1)$, $\vartheta\in(0,1)$ independent of $\alpha$, and $D>0$ that
\beqn
\bP(N_n < [t\alpha n])  \le  D \vartheta^n
\eeqn
holds for all $n\ge 0$.
\end{prop}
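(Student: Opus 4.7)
The plan is to reduce the statement to a large-deviations estimate for the sum of inter-coupling times and apply the exponential Chernoff bound. First, the identity $\{N_n<m\}=\{n_m>n\}$ (with $n_m=\tau_1+\cdots+\tau_m$) reduces matters to estimating $\bP(\tau_1+\cdots+\tau_m>n)$ for $m=[t\alpha n]$. By construction and the strong Markov property applied to the i.i.d.\ sequence~$\omega$, the inter-coupling times $\tau_k$ are independent: $\tau_1$ is distributed as the hitting time $T$ of Proposition~\ref{prop:coupling_time_tail} started from $L_0=(K'')^{1/\alpha}$, while for $k\ge 2$ the $\tau_k$ are i.i.d.\ copies of $T$ started from $L_0=(K')^{1/\alpha}$.

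Next, for $s\in(0,\log(1/q))$ the Chernoff inequality gives
\beqn
\bP\!\left(\sum_{k=1}^m \tau_k>n\right)\le e^{-sn}\prod_{k=1}^m \bE\bigl[e^{s\tau_k}\bigr].
\eeqn
I would bound each moment generating function using the tail $\bP(\tau_k>n)\le\ell_k q^n/(K-1)$ from Proposition~\ref{prop:coupling_time_tail}. Splitting the summation range at the threshold $n_0=\lceil\log(\ell_k/(K-1))/\log(1/q)\rceil$, which satisfies $n_0\le c_1/\alpha+O(1)$ with $c_1=\log K'/\log(1/q)$, a direct computation yields
\beqn
\bE\bigl[e^{s\tau_k}\bigr]\le C_s\exp\!\bigl(sc_1/\alpha\bigr),
\eeqn
for a constant $C_s$ depending only on $s,q,K,K',K''$ and independent of $\alpha$.

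Substituting and using $m\le t\alpha n$, the crucial cancellation $m\cdot sc_1/\alpha\le tsc_1 n$ eliminates $\alpha$ from the linear-in-$n$ part of the exponent, leading to
\beqn
\bP\bigl(N_n<[t\alpha n]\bigr)\le D_\alpha\exp\!\bigl(n\bigl[s(tc_1-1)+t\alpha\max(0,\log C_s)\bigr]\bigr),
\eeqn
with $D_\alpha$ absorbing the single factor contributed by~$\tau_1$. Finally, I would fix, say, $s=\tfrac12\log(1/q)$ and choose $t\in(0,1)$ so small that both $tc_1<\tfrac12$ and $t\max(0,\log C_s)<s/4$ hold; combined with $\alpha\le 1$, this forces the bracket to be $\le-\gamma$ for some $\gamma>0$ independent of $\alpha$, establishing the proposition with $\vartheta=e^{-\gamma}$.

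The main obstacle is that the initial level $\ell=(K')^{1/\alpha}$ of the auxiliary Markov chain $(L_n)$ makes the moment generating function of each $\tau_k$ blow up like $\exp(c_1/\alpha)$ as $\alpha\to 0$. The saving feature is that this bad factor is raised to the power $m=[t\alpha n]$, so the $\alpha$'s cancel and the product contributes an exponent linear in $n$ with an $\alpha$-free coefficient; this cancellation is precisely what enables $t$ and $\vartheta$ to be chosen independently of $\alpha$, as the proposition requires.
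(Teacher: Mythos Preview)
Your proposal is correct and follows essentially the same route as the paper: both reduce to the event $\{\sum_{k\le[t\alpha n]}\tau_k>n\}$, apply the exponential Chernoff inequality, bound the moment generating functions of the independent $\tau_k$ via the tail estimate of Proposition~\ref{prop:coupling_time_tail}, and exploit the same cancellation---the factor $(K')^{1/\alpha}$ in the MGF, raised to the power $[t\alpha n]$, contributes $(K')^{tn}$ with no residual $\alpha$-dependence---to choose $t$ and $\vartheta$ independently of~$\alpha$. The only cosmetic difference is that the paper computes $\bE(p^{-\tau_k})$ by direct tail integration with a tuned value of $p$ (so that the prefactor becomes exactly $(K')^{1/\alpha}$), whereas you split the sum at the threshold $n_0$; both yield the same $C\,(K')^{1/\alpha}$-type bound.
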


\begin{proof}
Observe that
\beqn
\bP(N_n < [t\alpha n]) = \bP\!\left(\sum_{j=1}^{[t\alpha n]} \tau_j > n\right),
\eeqn
where the equality holds because the sum appearing on the right side is just the time when the~$[t\alpha n]$th coupling occurs. The variables~$\tau_j$,~$j\ge 1$, are independent and~$\tau_j$,~$j\ge 2$, are also identically distributed. Thus, for any $p\in (0,1)$,
\beqn
\begin{split}
\bP\!\left(\sum_{j=1}^{[t\alpha n]} \tau_j > n\right) 
\le p^n\, \bE\!\left(p^{-\sum_{j=1}^{[t\alpha n]} \tau_j}\right)
= p^n\, \bE\!\left(p^{- \tau_1}\right) \! \left(\bE\!\left(p^{- \tau_2}\right)\right)^{[t\alpha n]-1} \le p^n\, \bE\!\left(p^{- \tau_1}\right)\! \left(\bE\!\left(p^{- \tau_2}\right)\right)^{t\alpha n}
\end{split} 
\eeqn
By Proposition~\ref{prop:coupling_time_tail}, each of the random variables $\tau_j$ has an exponential tail: more precisely, there exists $q\in(0,1)$ such that
\beqn
\bP(\tau_1>m) \le \frac{(K'')^{1/\alpha}}{K-1} q^m \quad\text{and}\quad \bP(\tau_2>m) \le \frac{(K')^{1/\alpha}}{K-1} q^m
\eeqn
for $m\ge 0$. Thus, fixing any $p\in(q,1)$,
\beqn
\bE\!\left(p^{- \tau_j}\right) = \int_1^\infty \bP(p^{-\tau_j}>x)\,\rd x  = \int_1^\infty \bP\!\left(\tau_j >\frac{\log x}{\log(p^{-1})}\right)\,\rd x   < \infty.
\eeqn
In fact, writing
\beqn
c = \int_1^\infty x^{-\log q/\log p}\,\rd x,
\eeqn
we have
\beqn
\bE\!\left(p^{- \tau_1}\right)  \le \frac{(K'')^{1/\alpha}c}{K-1}  \quad\text{and}\quad \bE\!\left(p^{- \tau_2}\right)  \le \frac{(K')^{1/\alpha}c}{K-1}.
\eeqn
We then have
\beqn
\bP(N_n < [u n])  \le  \frac{(K'')^{1/\alpha}c}{K-1}\left(p\biggl(\frac{(K')^{1/\alpha}c}{K-1}\biggr)^u\,\right)^n.
\eeqn
Moreover, with the choices
\beqn
p = q^{2\beta},\qquad \beta = \frac{K-1}{2K},\qquad u = t\alpha
\eeqn
we have $c = K-1$ and
\beqn
\bP(N_n < [t\alpha n])  \le  (K'')^{1/\alpha}\bigl(q^{2\beta}(K')^{t}\bigr)^{n}.
\eeqn
Now we choose $t > 0$ so that $(K')^t = q^{-\beta}$, which yields $\vartheta = q^\beta < 1$ and $D = (K'')^{1/\alpha}$. 
\end{proof}

Proposition~\ref{prop:N_n} implies the bound
\beqn
\begin{split}
\bP(\text{$N_n < [t\alpha n]$ for some $n\ge m$}) 
& = \bP\!\left(\bigcup_{n\ge m} \{N_n < [t\alpha n]\}\right)
\le  \sum_{n\ge m}\bP(N_n < [t\alpha n])
\\
& \le \sum_{n\ge m}D \vartheta^{n}
 =  D'\vartheta^{m}
\end{split}
\eeqn
for all $m\ge 0$.
Next, define the random time $\tilde n = \tilde n(K'',\omega)$ by
\beqn
\tilde n = \inf\{m\ge 0\,:\,\text{$N_n \ge [t\alpha n]$ for all $n\ge m$}\}.
\eeqn
In words, given a sequence, the number of couplings by time $n$ is at least $[t\alpha n]$ for \emph{every}~$n\ge \tilde n$.
Then
\beqn
\bP(\tilde n > k) = \bP(\text{$N_n < [t\alpha n]$ for some $n\ge k$}) \le D'\vartheta^k.
\eeqn
In particular, the expected value of $\tilde n$ is finite.

\medskip
\subsection{Proofs of the theorems}
In this section we patch together the results of the previous sections. This leads to Theorems~\ref{thm:seq_bounds} and~\ref{thm:bounds}.

\medskip
Given two probability densities $\psi^1,\psi^2\in\cH_{K''}$, we have
\beqn
\|\cL_{\omega_n}\cdots \cL_{\omega_1} (\psi^1-\psi^2)\|_{L^1(\fm)} \le 2(1-\kappa)^{[t\alpha n]} \le 2(1-\kappa)^{t\alpha n-1} 
\eeqn
for all $n\ge\tilde n$.
Thus, setting
\beqn
\chi(n) = \chi(n;K'',\omega) = 1_{\{\tilde n > n\}} + (1-\kappa)^{t\alpha n-1},
\eeqn
the bound
\beqn
\|\cL_{\omega_n}\cdots \cL_{\omega_1} (\psi^1-\psi^2)\|_{L^1(\fm)} \le 2\chi(n)
\eeqn
holds true for any $n\ge 0$. This implies~\eqref{eq:seq_conv} for the restricted class of densities.

Next, we relax the regularity condition. To this end, given an arbitrary probability density~$\psi\in C^\alpha$, define
\beqn
\psi_h = \frac{\psi+h}{1+h}, \quad h>0.
\eeqn
Since $\psi+h \ge h$, we have
\beqn
|{\log \psi_h(x) - \log\psi_h(y)} | = |{\log(\psi(x)+h) - \log(\psi(y)+h)} | \le \frac{1}{h} |\psi(x)-\psi(y)|\le \frac{|\psi|_\alpha}{h}d(x,y)^\alpha.
\eeqn
Thus, we obtain
\beqn
\psi_h \in \cH_1, \quad h\ge|\psi|_\alpha.
\eeqn
Recall $\abs{\log \phi}_\alpha \le \mathrm{Lip}(\phi) < \infty$.
Setting $h = |\psi|_\alpha + \mathrm{Lip}(\phi)$, both $\psi_h,\phi_h\in\cH_1$, so that
\beqn
\|\cL_{\omega_n}\cdots \cL_{\omega_1} (\psi-\phi)\|_{L^1(\fm)} \le 2(1+h)\chi(n;1,\omega).
\eeqn
As $\|\psi\|_\infty\ge 1$, we can estimate $1+h \le (1+\mathrm{Lip}(\phi))\|\psi\|_\alpha$. Taking expectations,
\beqn
\|\cP^n (\psi-\phi)\|_{L^1(\fm)} \le 2(1+\mathrm{Lip}(\phi))\|\psi\|_\alpha(D'\vartheta^{n} + (1-\kappa)^{t\alpha n-1}).
\eeqn
In other words, we have proved~\eqref{eq:seq_conv} and~\eqref{eq:conv}.

Let us continue our analysis of individual sequences~$\omega$. Suppose $g\in L^\infty$ is complex-valued and $f\in C^\alpha$ is real-valued with
$
\int f\,\rd \fm = 0.
$
Define
\beqn
\tilde f = \frac{f+2|f|_\alpha}{2|f|_\alpha}.
\eeqn
Since $\|f\|_\infty\le|f|_\alpha$, it is easy to check that $\tilde f\in\cH_1$.
Therefore,
\beqn
\begin{split}
\left|\int f\cdot g\circ T_{\omega_n}\circ\cdots\circ T_{\omega_1}\,\rd\fm \right| 
& = \left| \int \cL_{\omega_n}\cdots \cL_{\omega_1} f  \cdot g \,\rd\fm \right| 
 \le \|g\|_\infty \| \cL_{\omega_n}\cdots \cL_{\omega_1} f \|_{L^1(\fm)}
\\
& = 2|f|_\alpha\|g\|_\infty \| \cL_{\omega_n}\cdots \cL_{\omega_1} (\tilde f - \bfone) \|_{L^1(\fm)}
= 4|f|_\alpha\|g\|_\infty \chi(n;1,\omega).
\end{split}
\eeqn
In general, $\int f\,\rd\fm = 0$ fails, in which case the preceding bound yields
\beqn
\left|\int f\cdot g\circ T_{\omega_n}\circ\cdots\circ T_{\omega_1}\,\rd\fm - \int f\,\rd\fm \int g\circ T_{\omega_n}\circ\cdots\circ T_{\omega_1}\,\rd\fm \right| \le 4|f|_\alpha\|g\|_\infty \chi(n;1,\omega).
\eeqn
We can also change the measure in the integrals above. Indeed, let $\psi\in C^\alpha$ be a probability density and denote $\rd\nu = \psi\,\rd\fm$. Then readily
\beqn
\left|\int f\cdot g\circ T_{\omega_n}\circ\cdots\circ T_{\omega_1}\,\rd\nu - \int f\,\rd\nu \int g\circ T_{\omega_n}\circ\cdots\circ T_{\omega_1}\,\rd\fm \right| \le  4\|\psi\|_\alpha \|f\|_\alpha\|g\|_\infty \chi(n;1,\omega),
\eeqn
because $|f\psi|_\alpha\le\|\psi\|_\alpha \|f\|_\alpha$. On the other hand,
\beqn
\begin{split}
& \left| \int g\circ T_{\omega_n}\circ\cdots\circ T_{\omega_1}\,\rd\nu - \int g\circ T_{\omega_n}\circ\cdots\circ T_{\omega_1}\,\rd\fm \right|
\\
=\ & \left| \int \psi\cdot g\circ T_{\omega_n}\circ\cdots\circ T_{\omega_1}\,\rd\fm - \int\psi\,\rd\fm \int g\circ T_{\omega_n}\circ\cdots\circ T_{\omega_1}\,\rd\fm \right|
\\
\le\ & 4 |\psi|_\alpha\|g\|_\infty \chi(n;1,\omega).
\end{split}
\eeqn
Collecting the bounds,
\beqn
\left|\int f\cdot g\circ T_{\omega_n}\circ\cdots\circ T_{\omega_1}\,\rd\nu - \int f\,\rd\nu \int g\circ T_{\omega_n}\circ\cdots\circ T_{\omega_1}\,\rd\nu \right| \le 8\|\psi\|_\alpha \|f\|_\alpha\|g\|_\infty \chi(n;1,\omega).
\eeqn
Hence, we have proved~\eqref{eq:seq_mix} for real-valued $f$. For complex-valued $f$, a similar bound follows from the one above with a larger prefactor. This proves Theorem~\ref{thm:seq_bounds}.

In particular, we can choose $\nu = \mu$. Since $f$ and $g$ are bounded, we can therefore estimate
\beqn
\begin{split}
& \left|\int f\cdot \cQ^n g\,\rd\mu -\int f\,\rd\mu\int g\,\rd\mu \right| = \left|\int f\cdot \cQ^n g\,\rd\mu -\int f\,\rd\mu\int \cQ^n g\,\rd\mu \right| 
\\
=\ &
\left|\int f\cdot \bE\!\left[g\circ T_{\omega_n}\circ\cdots\circ T_{\omega_1}\right]\,\rd\mu -\int f\,\rd\mu\int \bE\!\left[g\circ T_{\omega_n}\circ\cdots\circ T_{\omega_1}\right]\,\rd\mu \right| 
\\
\le\ &
\bE\!\left[
\left|\int f\cdot g\circ T_{\omega_n}\circ\cdots\circ T_{\omega_1}\,\rd\mu -\int f\,\rd\mu\int g\circ T_{\omega_n}\circ\cdots\circ T_{\omega_1}\,\rd\mu \right|\right]
\\
\le\ &
C\|\phi\|_\alpha \|f\|_\alpha\|g\|_\infty E[\chi(n;1,\omega)]
\le C\|\phi\|_\alpha \|f\|_\alpha\|g\|_\infty( D'\vartheta^{n} + (1-\kappa)^{t\alpha n-1}).
\end{split}
\eeqn
This proves~\eqref{eq:mix} and Theorem~\ref{thm:bounds}.
\qed


\medskip
\section{Proof of Theorem~\ref{thm:VASIP}}

Our proof of Theorem~\ref{thm:VASIP} is based on providing exponential bounds, uniform in $n$, on multiple correlation functions of the form
\beq\label{eq:corr}
E^\mu[F G_n] - E^\mu[F]E^\mu[G_n],
\eeq
where, for certain Hölder continuous functions $g_i\in C^\alpha$, $i\ge 0$,
\beq\label{eq:F&G}
\begin{split}
F & = g_0\circ X_0\cdots g_m\circ X_m
\\
G_n & = g_{m+1}\circ X_{m+1+n}\cdots g_{m+k}\circ X_{m+k+n} \ .
\end{split}
\eeq
The main ingredient for obtaining such bounds will be the pair correlation bound in~\eqref{eq:mix} of Theorem~\ref{thm:bounds}. Here, beside the uniform exponential rate, the crucial bit of information is that the function~$g$ appearing in~\eqref{eq:mix} is only required to be in $L^\infty$ and that the bound depends on $g$ only through its $L^\infty$ norm.

\medskip

Fix $\alpha\in(0,1)$, $H>0$ and $\ve>0$.
Let $f_k$, $k\ge 0$, be real-valued functions such that
\beq\label{eq:f_cond}
\sup_{k\ge 0}|f_k|_\alpha \le H.
\eeq
Let $t_k$, $k\ge 0$, be real numbers satisfying
\beq\label{eq:t_cond}
\sup_{k\ge 0}|t_k|\leq \ve
\eeq
and define the functions
\beqn
g_k = e^{i t_k f_k}, \quad k\in \bN.
\eeqn
These are the functions we use in~\eqref{eq:F&G}. Notice immediately that
\beq\label{eq:g_bounds}
|g_k| = 1 \quad\text{and}\quad |g_k|_\alpha\le \ve H.
\eeq

For what follows, we define the operator $\hat\cP$ by setting
\beqn
\hat \cP g = \phi^{-1}\cP(\phi g).
\eeqn
This will be convenient for manipulating integrals with resect to the invariant measure $\mu$, as
\beqn
\int \hat\cP g\cdot f\,\rd \mu = \int \cP (\phi g)\cdot f\,\rd \fm = \int g\cdot \cQ f\,\rd \mu .
\eeqn
We also introduce the operators $\hat\cP_g$ and $\cQ_g$ which act according to
\beqn
\hat\cP_g(h) = \hat\cP(gh)
\quad\text{and}\quad
\cQ_g(h) = \cQ(g h).
\eeqn

\begin{lem}
Defining
\beqn
\widetilde G_n(\omega,x) = g_{m+1}\circ X_{1+n}(\sigma^m\omega,x)\cdots g_{m+k}\circ X_{k+n}(\sigma^m \omega,x),
\eeqn
we have
\beq\label{eq:corr_temp1}
E^\mu[F G_n] =  E^\mu \Bigl[g_m\hat\cP_{g_{m-1}}\cdots\hat\cP_{g_1}\hat\cP_{g_0} \bfone \cdot \widetilde G_n \Bigr].
\eeq
Above, the operator product $\hat\cP_{g_{m-1}}\cdots\hat\cP_{g_1}\hat\cP_{g_0}$ acts on the constant function $\bfone$. Moreover,
\beq\label{eq:corr_temp2}
E^\mu[G_n] = E^\mu\Bigl[\widetilde G_n \Bigr] 
\quad\text{and}\quad
E^\mu[F] = E^\mu\Bigl[g_m\hat\cP_{g_{m-1}}\cdots\hat\cP_{g_1}\hat\cP_{g_0} \bfone \Bigr].
\eeq
\end{lem}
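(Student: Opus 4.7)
The plan is to reduce both identities \eqref{eq:corr_temp1} and \eqref{eq:corr_temp2} to a single base identity expressing Markov-chain expectations in terms of the operators $\cQ_g$, followed by iterated applications of the duality between $\cQ$ and $\hat\cP$. The base identity is, for any bounded $h_0,\ldots,h_p:\bS\to\bC$,
\begin{equation*}
E^\mu[h_0(X_0)h_1(X_1)\cdots h_p(X_p)] = \int h_0 \cdot \cQ_{h_1}\cQ_{h_2}\cdots\cQ_{h_p}\bfone\,\rd\mu,
\end{equation*}
which I would prove by induction on $p$ using the Markov property in the form $E[f(X_1)\mid X_0=x] = \cQ f(x)$ together with the tower rule.

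I would address \eqref{eq:corr_temp2} first. The identity $E^\mu[G_n]=E^\mu[\widetilde G_n]$ rests on two invariance properties: stationarity of $(X_n)_{n\geq 0}$ under $P^\mu$ (so that the law of $(X_{m+1+n},\ldots,X_{m+k+n})$ coincides with that of $(X_{1+n},\ldots,X_{k+n})$), and shift-invariance of $\bP=\eta^\infty$ (so that the $\sigma^m$ appearing in $\widetilde G_n$ drops out after $\bP$-integration). Both sides then reduce to the same joint integral. For the second half of \eqref{eq:corr_temp2}, the base identity applied with $p=m$ and $h_i=g_i$ yields $E^\mu[F]=\int g_0\cdot\cQ_{g_1}\cdots\cQ_{g_m}\bfone\,\rd\mu$; I then apply the duality $\int g\,\cQ f\,\rd\mu = \int(\hat\cP g) f\,\rd\mu$ once per $\cQ$ on the right, moving each $\cQ_{g_i}$ onto the left side as $\hat\cP_{g_i}$. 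After $m$ such steps the integrand reads $g_m\cdot\hat\cP_{g_{m-1}}\cdots\hat\cP_{g_0}\bfone$, which is the asserted expression.

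For \eqref{eq:corr_temp1}, conditioning on $X_m$ via the Markov property converts
\begin{equation*}
E^\mu[FG_n] = E^\mu\bigl[g_0(X_0)\cdots g_{m-1}(X_{m-1})\cdot (g_m h)(X_m)\bigr],
\end{equation*}
where $h(x)=E^x[g_{m+1}(X_{1+n})\cdots g_{m+k}(X_{k+n})]$. I would then apply the base identity with $h_i=g_i$ for $i<m$ and $h_m=g_m h$, followed by the same $m$-fold duality as above, producing $\int (g_m h)\cdot\hat\cP_{g_{m-1}}\cdots\hat\cP_{g_0}\bfone\,\rd\mu$. The final step is the identity $\int\Psi\cdot h\,\rd\mu = E^\mu[\Psi\cdot\widetilde G_n]$, valid for any bounded $\Psi:\bS\to\bC$ by the same shift-invariance argument used for $E^\mu[G_n]$; applying it with $\Psi=g_m\hat\cP_{g_{m-1}}\cdots\hat\cP_{g_0}\bfone$ yields the claim.

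The one delicate point is bookkeeping in the iterated duality: one must verify that the $\hat\cP_{g_i}$'s accumulate in the reverse order to the $\cQ_{g_i}$'s, and that the innermost factor collapses naturally as $\hat\cP g_0 = \hat\cP_{g_0}\bfone$ rather than leaving an extra stray $\hat\cP$. Apart from this, the argument is a routine combination of the Markov property, stationarity of $\mu$, shift-invariance of $\bP$, and the defining duality between $\cQ$ and $\hat\cP$.
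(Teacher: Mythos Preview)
Your proposal is correct and follows essentially the same route as the paper: both arguments rest on stationarity/shift-invariance to identify $G_n$ with $\widetilde G_n$, an integration over $\omega_1,\dots,\omega_m$ (which you phrase abstractly via the Markov property and a base identity, while the paper does it directly via $X_{m+l}(\omega,x)=X_l(\sigma^m\omega,X_m(\omega,x))$), and then the iterated $\cQ$--$\hat\cP$ duality. The only cosmetic difference is that you condition on $X_m$ to produce the scalar function $h$ and restore the $\omega$-dependence at the end, whereas the paper carries $\widetilde G_n(\omega,x)$ throughout; both organizations yield the same computation.
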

\begin{proof}
We can write
\beqn
X_{m+l}(\omega,x) = X_{l}(\sigma^m\omega,X_m(\omega,x)).
\eeqn
Accordingly, $\widetilde G_n(\omega,X_m(\omega,x)) = G_n(\omega,x)$. Because the Markov chain $(X_n)_{n\geq 0}$ is stationary, $X_m$ has distribution $\mu$. Now, since $\widetilde G_n(\omega,\slot)$ only depends on $\omega_i$, $i>m$, the first identity in~\eqref{eq:corr_temp2} follows. Next, integrating with respect to the variables~$\omega_i$, $1\leq i\leq m$, in ascending order of the index~$i$, we get
\beqn
E^\mu[F G_n] =  E^\mu \Bigl[g_0\cQ_{g_1}\cQ_{g_2} \cdots \cQ_{g_m} \widetilde G_n \Bigr].
\eeqn
Using duality repeatedly, starting with the first $\cQ$ from the left, then the second, and so on, we arrive ultimately at \eqref{eq:corr_temp1}. The second identity in~\eqref{eq:corr_temp2} is proved in a similar fashion.
\end{proof}

As a consequence, the difference in~\eqref{eq:corr} equals
\beqn
E^\mu \Bigl[g_m\hat\cP_{g_{m-1}}\cdots\hat\cP_{g_1}\hat\cP_{g_0} \bfone \cdot \widetilde G_n \Bigr] - E^\mu \Bigl[g_m\hat\cP_{g_{m-1}}\cdots\hat\cP_{g_1}\hat\cP_{g_0} \bfone \Bigr]E^\mu \Bigl[\widetilde G_n \Bigr] .
\eeqn
Note also that $g_m\hat\cP_{g_{m-1}}\cdots\hat\cP_{g_1}\hat\cP_{g_0} \bfone$ does not depend on $\omega$ at all. Thus, 
\beqn
E^\mu \Bigl[g_m\hat\cP_{g_{m-1}}\cdots\hat\cP_{g_1}\hat\cP_{g_0} \bfone \Bigr] 
= \int g_m\hat\cP_{g_{m-1}}\cdots\hat\cP_{g_1}\hat\cP_{g_0} \bfone \,\rd\mu.
\eeqn
We can also integrate out the $\omega$-dependence of $\widetilde G_n$:
\beqn
\int \widetilde G_n \, \rd\eta^{k+n}(\omega_{m+1},\cdots,\omega_{m+k+n}) = \cQ^n \cQ_{g_{m+1}} \cQ_{g_{m+2}} \cdots \cQ_{g_{m+k}}\bfone.
\eeqn
Here the $\omega_i$-integrals were done in descending order of the index $i$. The resulting expression only depends on $x$. This leaves us with
\beq\label{eq:multi_bound}
\begin{split}
& E^\mu[F G_n] - E^\mu[F]E^\mu[G_n] 
=
\int g_m\hat\cP_{g_{m-1}}\cdots\hat\cP_{g_1}\hat\cP_{g_0} \bfone \cdot \cQ^n \cQ_{g_{m+1}} \cQ_{g_{m+2}} \cdots \cQ_{g_{m+k}}\bfone\,\rd\mu 
\\
&\qquad\qquad\qquad\qquad\qquad
 -\int g_m\hat\cP_{g_{m-1}}\cdots\hat\cP_{g_1}\hat\cP_{g_0} \bfone \,\rd\mu \cdot
\int  \cQ^n \cQ_{g_{m+1}} \cQ_{g_{m+2}} \cdots \cQ_{g_{m+k}}\bfone\,\rd\mu .
\end{split}
\eeq

\medskip

In order to take advantage of~\eqref{eq:mix} directly, we will need to bound $\cQ_{g_{m+1}} \cQ_{g_{m+2}} \cdots \cQ_{g_{m+k}}\bfone$ in the supremum norm and $g_m\hat\cP_{g_{m-1}}\cdots\hat\cP_{g_1}\hat\cP_{g_0} \bfone$ in the Hölder norm. Bounds in the supremum norm are immediate, because $\cQ$ and $\cP$ are increasing operators and because $\|g_i\|_\infty = 1$. Indeed,
\beqn
|\cQ_{g_i} h| \le \cQ|g_i h| \le \|h\|_\infty \cQ\bfone = \|h\|_\infty
\eeqn
and
\beqn
|\hat\cP_{g_i} h| \le \phi^{-1}\cP(\phi |g_i h|) \leq \|h\|_\infty\phi^{-1}\cP(\phi) = \|h\|_\infty
\eeqn
for any $h:\bS\to\bC$, so that
\beq\label{eq:Q_prod_bound}
\|\cQ_{g_{m+1}} \cQ_{g_{m+2}} \cdots \cQ_{g_{m+k}}\bfone\|_\infty \le 1
\eeq
and
\beq\label{eq:P_prod_bound}
\|g_m\hat\cP_{g_{m-1}}\cdots\hat\cP_{g_1}\hat\cP_{g_0} \bfone\|_\infty \le \|\hat\cP_{g_{m-1}}\cdots\hat\cP_{g_1}\hat\cP_{g_0} \bfone\|_\infty \le 1.
\eeq
We now proceed to bounding the Hölder constant $|g_m\hat\cP_{g_{m-1}}\cdots\hat\cP_{g_1}\hat\cP_{g_0} \bfone|_\alpha$, which is more subtle.

\medskip

Note that
\beq\label{eq:P_prod}
\hat\cP_{g_{n-1}}\cdots\hat\cP_{g_0} \bfone = \phi^{-1}\cP_{g_{n-1}}\cdots \cP_{g_0} \phi = \phi^{-1} \int \cL_{\omega_n,g_{n-1}}\cdots \cL_{\omega_1,g_0} \phi \,\rd\eta^n(\omega_1,\dots,\omega_n),
\eeq
where 
\beqn
\cL_{\omega_i,g} h = \cL_{\omega_i} (g h).
\eeqn
The following identity will be convenient, because the right side involves a composition of the ``usual'' transfer operators $\cL_{\omega_i}$:

\begin{lem}\label{lem:L_prod}
For any $h$,
\beqn
\cL_{\omega_n,g_{n-1}}\cdots \cL_{\omega_1,g_0} h = \cL_{\omega_n}\cdots\cL_{\omega_1}(e^{V_n} h),
\eeqn
where $V_n = V_n(\omega) = \sum_{k=0}^{n-1} it_kf_k\circ T_{\omega_k}\circ\dots\circ T_{\omega_1}$. 
\end{lem}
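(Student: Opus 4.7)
\medskip

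The plan is to prove Lemma~\ref{lem:L_prod} by induction on $n$, using the well-known transfer operator identity
\beqn
\cL_{T}\bigl(\varphi \cdot (\psi\circ T)\bigr) = \psi \cdot \cL_{T}\varphi
\eeqn
applied to the composed map $T_{\omega_n}\circ\dots\circ T_{\omega_1}$, which (by the chain rule for transfer operators) equals $\cL_{\omega_n}\cdots\cL_{\omega_1}$. In particular, for any test function $\varphi$ and any function $\psi$ on~$\bS$,
\beq\label{eq:pullout}
\cL_{\omega_n}\cdots\cL_{\omega_1}\bigl(\varphi\cdot (\psi\circ T_{\omega_n}\circ\dots\circ T_{\omega_1})\bigr) = \psi\cdot \cL_{\omega_n}\cdots\cL_{\omega_1}\varphi.
\eeq

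For the base case $n=1$, using the convention that the empty composition is the identity (so that the $k=0$ term in $V_n$ is simply $it_0 f_0$), the definition gives $\cL_{\omega_1,g_0} h = \cL_{\omega_1}(g_0 h) = \cL_{\omega_1}(e^{it_0 f_0}h) = \cL_{\omega_1}(e^{V_1} h)$, as required.

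For the inductive step, assume the identity holds for~$n$ and compute
\beqn
\cL_{\omega_{n+1},g_n}\cL_{\omega_n,g_{n-1}}\cdots\cL_{\omega_1,g_0} h
= \cL_{\omega_{n+1}}\!\left(g_n\cdot \cL_{\omega_n}\cdots\cL_{\omega_1}(e^{V_n} h)\right).
\eeqn
On the other hand, $V_{n+1}-V_n = it_n f_n\circ T_{\omega_n}\circ\dots\circ T_{\omega_1}$, so $e^{V_{n+1}} = e^{V_n}\cdot \bigl(g_n\circ T_{\omega_n}\circ\dots\circ T_{\omega_1}\bigr)$. Applying~\eqref{eq:pullout} with $\varphi = e^{V_n}h$ and $\psi = g_n$ then gives
\beqn
\cL_{\omega_n}\cdots\cL_{\omega_1}(e^{V_{n+1}} h) = g_n \cdot \cL_{\omega_n}\cdots\cL_{\omega_1}(e^{V_n} h).
\eeqn
Applying $\cL_{\omega_{n+1}}$ to both sides matches the displayed expression above, completing the induction.

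The argument is essentially bookkeeping; the only substantive ingredient is the pull-out identity~\eqref{eq:pullout}, which is standard. The one point to watch is the interpretation of the $k=0$ summand in $V_n$, where the composition $T_{\omega_k}\circ\dots\circ T_{\omega_1}$ must be read as the identity map. No real obstacles are expected.
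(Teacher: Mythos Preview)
Your proof is correct and follows the same inductive structure as the paper's. The only difference is presentational: the paper verifies the inductive step weakly by integrating against an arbitrary test function $u$ and using duality $\int u\cdot\cL_T\varphi\,\rd\fm = \int (u\circ T)\varphi\,\rd\fm$, whereas you use the equivalent pointwise pull-out identity~\eqref{eq:pullout} directly---a slightly cleaner route to the same conclusion.
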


\begin{proof}
This holds for $n=1$. Assume that it holds for $n=k$. Then, for $n=k+1$ and any~$u$,
\beqn
\begin{split}
& \int u\cdot \cL_{\omega_n,g_{n-1}}\cdots \cL_{\omega_1,g_0} h \,\rd\fm 
 =  \int u\cdot \cL_{\omega_n,g_{n-1}} \cL_{\omega_{n-1}}\cdots\cL_{\omega_1}(e^{V_{n-1}} h)\,\rd\fm
\\
& \qquad\qquad\qquad =  \int u\circ T_{\omega_n} e^{it_{n-1}f_{n-1}} \cL_{\omega_{n-1}}\cdots\cL_{\omega_1}(e^{V_{n-1}} h)\,\rd\fm
\\
& \qquad\qquad\qquad =  \int u\circ T_{\omega_n}\circ\cdots \circ T_{\omega_1} \exp(it_{n-1}f_{n-1}\circ T_{\omega_{n-1}}\circ\cdots \circ T_{\omega_1}) \cdot e^{V_{n-1}} h\,\rd\fm
\\
& \qquad\qquad\qquad =  \int u\circ T_{\omega_n}\circ\cdots \circ T_{\omega_1} \cdot e^{V_{n}} h\,\rd\fm =   \int u\cdot \cL_{\omega_n}\cdots\cL_{\omega_1}(e^{V_{n}} h)\,\rd\fm.
\end{split}
\eeqn
Thus, the induction principle proves the lemma.
\end{proof}

\begin{lem}\label{lem:L_prod_bound}
For any complex-valued $h\in C^\alpha$,
\beqn
|\cL_{\omega_n,g_{n-1}}\cdots \cL_{\omega_1,g_0} h|_\alpha
\le (1+R_n)\left( \sum_{j = 1}^n \frac{\|h\|_\infty \Delta_{\omega_j}}{\lambda_{\omega_n}\cdots\lambda_{\omega_{j+1}}}  +  \sum_{k=0}^{n-1} \frac{\ve H\|h\|_\infty}{\lambda_{\omega_n}^{\alpha}\cdots \lambda_{\omega_{k+1}}^{\alpha}} + \frac{|h|_\alpha}{\lambda_{\omega_n}^{\alpha}\cdots \lambda_{\omega_1}^{\alpha}} \right) .
\eeqn
\end{lem}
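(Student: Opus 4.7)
The plan is to first invoke Lemma~\ref{lem:L_prod} to rewrite
\[
\cL_{\omega_n,g_{n-1}}\cdots\cL_{\omega_1,g_0} h = \cL_{\omega_n}\cdots\cL_{\omega_1}(e^{V_n}h),
\]
reducing the task to bounding the H\"older constant of a standard transfer operator product applied to the single function $e^{V_n}h$. Writing $T^n = T_{\omega_n}\circ\cdots\circ T_{\omega_1}$, taking $x,y$ in a common arc of length at most $\tfrac12$, and denoting the corresponding inverse branches by $x_i=(T^n)^{-1}_i(x)$, $y_i=(T^n)^{-1}_i(y)$, I would expand the difference $\cL_{\omega_n}\cdots\cL_{\omega_1}(e^{V_n}h)(x) - \cL_{\omega_n}\cdots\cL_{\omega_1}(e^{V_n}h)(y)$ via the standard three-term splitting into contributions $(\mathrm{A})$, $(\mathrm{B})$, $(\mathrm{C})$ that isolate the variations of $h$, of $e^{V_n}$, and of the Jacobian $1/(T^n)'$, respectively.

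For $(\mathrm{A})$ I use $|e^{V_n}|=1$, the bound $d(x_i,y_i)\le S_n d(x,y)$ for the contracting inverse branch, and the pointwise estimate $\sum_i 1/(T^n)'(x_i)= \cL_{\omega_n}\cdots\cL_{\omega_1}\bfone(x)\le 1+R_n$ from~\eqref{eq:L_prod_sup}; this yields $|(\mathrm{A})|\le (1+R_n)|h|_\alpha S_n^\alpha d(x,y)^\alpha$, which is the last term of the claim. For $(\mathrm{B})$ I use $|e^{is}-e^{it}|\le|s-t|$ to reduce to bounding $V_n(x_i)-V_n(y_i)$; the key observation is the identity $T_{\omega_k}\circ\cdots\circ T_{\omega_1}\circ(T^n)^{-1}_i = (T_{\omega_n}\circ\cdots\circ T_{\omega_{k+1}})^{-1}_i$, whose right-hand side is a contraction with factor $\prod_{j=k+1}^n\lambda_{\omega_j}^{-1}$. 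Combined with $|t_k|\le\ve$ and $|f_k|_\alpha\le H$, this gives the uniform-in-$i$ bound $|e^{V_n}(x_i)-e^{V_n}(y_i)|\le\ve H\,d(x,y)^\alpha\sum_{k=0}^{n-1}\prod_{j=k+1}^n\lambda_{\omega_j}^{-\alpha}$, which combined once more with $\sum_i 1/(T^n)'(x_i)\le 1+R_n$ and $|h(y_i)|\le\|h\|_\infty$ yields the middle term.

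The main obstacle is $(\mathrm{C})$: the naive distortion bound $|1-(T^n)'(x_i)/(T^n)'(y_i)|\le e^{R_n d(x,y)}-1$ introduces an unbounded exponential, while the direct mean value estimate $|1/(T^n)'(x_i)-1/(T^n)'(y_i)|\le R_n S_n d(x,y)$ has an unwanted $S_n$ sitting inside the sum. My fix is to write each summand as an integral,
\[
\frac{1}{(T^n)'(x_i)}-\frac{1}{(T^n)'(y_i)} = -\int_{y_i}^{x_i}\frac{(T^n)''(z)}{((T^n)'(z))^2}\,dz,
\]
observe via the chain rule that the integrand satisfies $|(T^n)''/((T^n)')^2|\le R_n$ pointwise (the same computation that underlies the distortion lemma), and then change variables $w=T^n(z)$ on each branch to obtain
\[
\sum_i\left|\tfrac{1}{(T^n)'(x_i)}-\tfrac{1}{(T^n)'(y_i)}\right|\le \int_y^x\cL_{\omega_n}\cdots\cL_{\omega_1}\!\left(\left|\tfrac{(T^n)''}{((T^n)')^2}\right|\right)(w)\,dw.
\]
Monotonicity of $\cL_{\omega_n}\cdots\cL_{\omega_1}$ together with~\eqref{eq:L_prod_sup} then bounds the right-hand side by $R_n(1+R_n)d(x,y)$; since $d(x,y)\le\tfrac12<1$ implies $d(x,y)\le d(x,y)^\alpha$, this converts into the H\"older bound $|(\mathrm{C})|\le\|h\|_\infty R_n(1+R_n)d(x,y)^\alpha$, matching the first term of the claim via $R_n=\sum_{j=1}^n\Delta_{\omega_j}\prod_{k=j+1}^n\lambda_{\omega_k}^{-1}$. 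Summing $(\mathrm{A})$, $(\mathrm{B})$, $(\mathrm{C})$ and dividing by $d(x,y)^\alpha$ completes the argument.
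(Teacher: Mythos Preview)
Your proof is correct and follows essentially the same route as the paper: both invoke Lemma~\ref{lem:L_prod}, split the transfer-operator difference into a Jacobian part and an $e^{V_n}h$ part, handle the Jacobian via the integral representation combined with the pointwise bound $\|\cT_n''/(\cT_n')^2\|_\infty\le R_n$ and $\|\cL_{\omega_n}\cdots\cL_{\omega_1}\bfone\|_\infty\le 1+R_n$, and handle the $e^{V_n}$ and $h$ contributions via the branchwise contraction $d(\cT_k(x_i),\cT_k(y_i))\le\prod_{j=k+1}^n\lambda_{\omega_j}^{-1}d(x,y)$. The only cosmetic differences are that you do a direct three-term split where the paper first groups $e^{V_n}h$ together and then splits, and that you parametrize the Jacobian integral by the preimage variable and change variables, whereas the paper differentiates $1/\cT_n'(\cS_{n,i}z)$ directly in the image variable~$z$.
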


\begin{proof}
Consider two points $x,y\in\bS$ and an arc $J$ containing both $x$ and $y$ with $|J|\le \frac12$. Denote by $\cS_{n,i}:J\to \cS_{n,i}J$, $1\le i\le w$, the branches of the inverse of $\cT_n \equiv T_{\omega_n}\circ\dots\circ T_{\omega_1}$. Note
\beqn
 \frac{d}{dz} \frac{1}{\cT_n'(\cS_{n,i} z)} = - \frac{\cT_n''(\cS_{n,i} z)\cdot \cS_{n,i}'(z)}{(\cT_n'(\cS_{n,i} z))^2} = - \frac{\cT_n''(\cS_{n,i} z)}{(\cT_n'(\cS_{n,i} z))^3} .
\eeqn
Observe also that $\cL_{\omega_n}\cdots\cL_{\omega_1}$ is the transfer operator associated to $\cT_n$. Without loss of generality, we assume $\cT_n'>0$. Then, recalling Lemma~\ref{lem:L_prod} and that $|e^{V_n}| = 1$,
\begin{align*}
& \left|\cL_{\omega_n}\cdots\cL_{\omega_1}(e^{V_n} h)(x)-\cL_{\omega_n}\cdots\cL_{\omega_1}(e^{V_n} h)(y)\right|
= \left|\sum^{w}_{i=1}\frac{(e^{V_n} h)(\cS_{n,i} x)}{\cT_n'(\cS_{n,i} x)}-\sum^{w}_{i=1}\frac{(e^{V_n} h)(\cS_{n,i} y)}{\cT_n'(\cS_{n,i} y)} \right|
\\
&
\qquad\leq \left|\sum^{w}_{i=1}\left(\frac{1}{\cT_n'(\cS_{n,i} x)}-\frac{1}{\cT_n'(\cS_{n,i} y)}\right) (e^{V_n} h)(\cS_{n,i} x)\right|
\\
&\qquad\qquad\qquad\qquad\qquad\qquad
	+\left|\sum^{w}_{i=1}\frac{1}{\cT_n'(\cS_{n,i} y)} \left((e^{V_n} h)(\cS_{n,i} x) - (e^{V_n} h)(\cS_{n,i} y)\right)  \right|
\\
&
\qquad\leq \|e^{V_n} h\|_\infty \sum^{w}_{i=1}\left|\frac{1}{\cT_n'(\cS_{n,i} x)}-\frac{1}{\cT_n'(\cS_{n,i} y)}\right|
	+ \sum^{w}_{i=1}\frac{1}{\cT_n'(\cS_{n,i} y)} |(e^{V_n} h)(\cS_{n,i} y)-(e^{V_n} h)(\cS_{n,i} x)|
\\
&
\qquad\leq \|h\|_\infty \sum^{w}_{i=1}\left|\int \frac{\cT_n''(\cS_{n,i} z)}{(\cT_n'(\cS_{n,i} z))^3} \,\rd\fm(z) \, d(x, y)\right|
	+ \sum^{w}_{i=1}\frac{|e^{V_n\circ \cS_{n,i}}h\circ \cS_{n,i}|_\alpha}{\cT_n'(\cS_{n,i} y)}d(x,y)^\alpha
\\
&
\qquad\leq \|h\|_\infty  \left \|\frac{\cT_n''}{(\cT_n')^2}\right\|_\infty \int \cL_{\omega_n}\cdots\cL_{\omega_1}\bfone \,\rd\fm \, d(x, y)
\\
&\qquad\qquad\qquad\qquad\qquad\qquad
	+  \sup_{1\leq i \leq w}|e^{V_n\circ \cS_{n,i}}h\circ \cS_{n,i}|_\alpha  d(x,y)^\alpha \cL_{\omega_n}\cdots\cL_{\omega_1}\bfone(y)
\\
&
\qquad\leq \left(\|h\|_\infty \left\| \frac{\cT_n''}{(\cT_n')^2} \right\|_\infty
	+ \sup_{1\leq i \leq w}|e^{V_n\circ \cS_{n,i}}h\circ \cS_{n,i}|_\alpha\right)d(x,y)^\alpha \cdot \|\cL_{\omega_n}\cdots\cL_{\omega_1}\bfone\|_\infty .
\end{align*}
Now, we can first estimate $\sup_{1\leq i \leq w}|e^{V_n\circ \cS_{n,i}}h\circ \cS_{n,i}|_\alpha$:
\beqn
\begin{split}
|e^{V_n\circ \cS_{n,i}}h\circ \cS_{n,i}|_\alpha 
&\le \|h\|_\infty |e^{V_n\circ \cS_{n,i}}|_\alpha + |h\circ \cS_{n,i}|_\alpha
\le \|h\|_\infty \sum_{k=0}^{n-1}|t_k| |f_k\circ\cT_k\circ \cS_{n,i}|_\alpha + |h|_\alpha \lambda_{\cT_n}^{-\alpha}
\\
&\le \ve H\|h\|_\infty  \sum_{k=0}^{n-1}  \lambda_{\omega_n}^{-\alpha}\cdots \lambda_{\omega_{k+1}}^{-\alpha} + |h|_\alpha \lambda_{\omega_n}^{-\alpha}\cdots \lambda_{\omega_1}^{-\alpha} .
\end{split}
\eeqn
Finally,
\beqn
\begin{split}
 \left\| \frac{\cT_n''}{(\cT_n')^2} \right\|_\infty
& = \left \| \frac{1}{\cT_n'} \left(\log \cT_n'\right)' \right\|_\infty
= \left\| \frac{1}{\cT_n'} \sum_{j = 1}^n \left(\log T_{\omega_j}'\circ \cT_{j-1}\right)' \right\|_\infty 
\\
& \le \sum_{j = 1}^n \left\| \frac{1}{(T_{\omega_n}\circ\cdots\circ T_{\omega_{j+1}})'\circ\cT_j\cdot \cT_j'} \cdot \frac{T_{\omega_j}''\circ \cT_{j-1}\cdot \cT_{j-1}'}{T_{\omega_j}'\circ \cT_{j-1}} \right\|_\infty
\\
& = \sum_{j = 1}^n  \left\| \frac{1}{(T_{\omega_n}\circ\cdots\circ T_{\omega_{j+1}})'\circ\cT_j} \cdot \frac{T_{\omega_j}''}{(T_{\omega_j}')^2}\circ \cT_{j-1} \right\|_\infty 
\le \sum_{j = 1}^n \frac{1}{\lambda_{\omega_n}\cdots\lambda_{\omega_{j+1}}} \Delta_{\omega_j} \ .
\end{split}
\eeqn
Collecting the bounds and recalling~\eqref{eq:L_prod_sup} finishes the proof.
\end{proof}

\begin{prop}\label{prop:Holder_bound}
Given $\alpha\in(0,1)$, $H>0$ and $\ve>0$, there exists such a constant $C>0$ that
\beqn
\sup_{n\ge 0}|g_n\hat\cP_{g_{n-1}}\cdots\hat\cP_{g_0} \bfone|_\alpha
\leq C
\eeqn
holds for all $n \ge 0$, for all choices of $(f_k)_{k\ge 0}$ and $(t_k)_{k\ge 0}$ satisfying~\eqref{eq:f_cond} and~\eqref{eq:t_cond}.
\end{prop}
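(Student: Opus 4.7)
The plan is to bound the $\alpha$-Hölder constant by unraveling the operator product via~\eqref{eq:P_prod}, applying Lemma~\ref{lem:L_prod_bound} to control the integrand, and then estimating the resulting random quantities in expectation using Lemma~\ref{lem:R^2_bound} together with elementary moment computations.

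First I would write, using~\eqref{eq:P_prod},
\[
\hat\cP_{g_{n-1}}\cdots\hat\cP_{g_0}\bfone = \phi^{-1} F_n, \qquad F_n(x) = \bE\bigl[\cL_{\omega_n,g_{n-1}}\cdots\cL_{\omega_1,g_0}\phi(x)\bigr].
\]
Since $\phi$ is Lipschitz and, by Corollary~\ref{cor:stationary_lowerbound}, bounded below by a positive constant depending only on the moments in~\eqref{eq:moment_conditions}, the function $\phi^{-1}$ has bounded Hölder norm. Combined with $\|g_n\|_\infty = 1$, $|g_n|_\alpha \le \ve H$ from~\eqref{eq:g_bounds}, and the sup-norm bound $\|\phi^{-1}F_n\|_\infty = \|\hat\cP_{g_{n-1}}\cdots\hat\cP_{g_0}\bfone\|_\infty \le 1$ from~\eqref{eq:P_prod_bound} (which in particular gives $\|F_n\|_\infty \le \|\phi\|_\infty$), two applications of the product rule $|uv|_\alpha \le \|u\|_\infty |v|_\alpha + |u|_\alpha\|v\|_\infty$ reduce the problem to a uniform bound on $|F_n|_\alpha \le \bE[|\cL_{\omega_n,g_{n-1}}\cdots\cL_{\omega_1,g_0}\phi|_\alpha]$.

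Next I would invoke Lemma~\ref{lem:L_prod_bound} with $h = \phi$, which dominates the random Hölder constant by
\[
(1+R_n)\bigl( \|\phi\|_\infty R_n + \ve H\|\phi\|_\infty\, \Sigma_n + |\phi|_\alpha\, \Pi_n \bigr), \qquad \Sigma_n = \sum_{k=0}^{n-1}\prod_{i=k+1}^n \lambda_{\omega_i}^{-\alpha}, \quad \Pi_n = \prod_{i=1}^n \lambda_{\omega_i}^{-\alpha}.
\]
The $R_n$-summand gives $\bE[(1+R_n)R_n] \le \bE[(1+R_n)^2]$, which is uniformly bounded by Lemma~\ref{lem:R^2_bound}. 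For the $\Sigma_n$ and $\Pi_n$ summands I would use Cauchy--Schwarz, reducing matters to uniform bounds on $\bE[\Sigma_n^2]$ and $\bE[\Pi_n^2]$. Since $\alpha\in(0,1)$, Jensen's inequality applied to $t\mapsto t^\alpha$ yields $\langle\lambda^{-\alpha}\rangle\le\langle\lambda^{-2}\rangle^{\alpha/2}<1$ and $\langle\lambda^{-2\alpha}\rangle\le\langle\lambda^{-2}\rangle^{\alpha}<1$. Then $\bE[\Pi_n^2]=\langle\lambda^{-2\alpha}\rangle^n$, while a direct expansion of $\bE[\Sigma_n^2]$ into $\langle\lambda^{-\alpha}\rangle^{k_2-k_1}\langle\lambda^{-2\alpha}\rangle^{n-k_2}$ terms sums geometrically, giving a bound independent of $n$.

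The main subtlety is that $R_n$, $\Sigma_n$, and $\Pi_n$ are all built from the same sequence $(\omega_i)$, so the expectations do not factor; Cauchy--Schwarz is precisely what bypasses this obstruction, at the very cheap cost of having to control second moments of each decay term, which is straightforward thanks to the independence of the individual factors and the product structure. Collecting the three uniform bounds and folding them back into the product-rule estimate of the first paragraph yields a constant $C = C(\alpha, H, \ve)$ depending on the moments in~\eqref{eq:moment_conditions} and on the Lipschitz constants and lower bound of $\phi$, completing the proof.
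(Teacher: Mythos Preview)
Your proposal is correct and follows essentially the same route as the paper: reduce via the product rule and~\eqref{eq:P_prod} to bounding $\bE[|\cL_{\omega_n,g_{n-1}}\cdots\cL_{\omega_1,g_0}\phi|_\alpha]$, invoke Lemma~\ref{lem:L_prod_bound}, and control the resulting expectation by Cauchy--Schwarz together with Lemma~\ref{lem:R^2_bound} and geometric summation using $\langle\lambda^{-2\alpha}\rangle\le\langle\lambda^{-2}\rangle^\alpha<1$. The only cosmetic differences are that the paper uses Minkowski's inequality on each sum rather than expanding $\Sigma_n^2$ directly, and applies Cauchy--Schwarz uniformly to all three summands instead of noting (as you do) that the first summand is exactly $R_n$ and bounding $\bE[(1+R_n)R_n]\le\bE[(1+R_n)^2]$ outright.
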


\begin{proof}
First,
\begin{align*}
\begin{split}
|g_n\hat\cP_{g_{n-1}}\cdots\hat\cP_{g_0} \bfone|_\alpha
&\leq |g_n|_\alpha \|\hat\cP_{g_{n-1}}\cdots\hat\cP_{g_0} \bfone\|_\infty + |\hat\cP_{g_{n-1}}\cdots\hat\cP_{g_0} \bfone|_\alpha
\\
& \le \ve H + |\hat\cP_{g_{n-1}}\cdots\hat\cP_{g_0} \bfone|_\alpha
\end{split}
\end{align*}
where \eqref{eq:g_bounds} and~\eqref{eq:P_prod_bound} were used. Next, by~\eqref{eq:P_prod} and Lemma~\ref{lem:L_prod},
\begin{align*}
|\hat\cP_{g_{n-1}}\cdots\hat\cP_{g_0} \bfone|_\alpha &\leq \left| \phi^{-1} \int \cL_{\omega_n,g_{n-1}}\cdots \cL_{\omega_1,g_0} \phi \,\rd\eta^n(\omega_1,\dots,\omega_n)\right|_\alpha
\\
&\leq |\phi^{-1}|_\alpha\|\phi\hat\cP_{g_{n-1}}\cdots\hat\cP_{g_0} \bfone\|_\infty + \|\phi^{-1}\|_\infty\int |\cL_{\omega_n,g_{n-1}}\cdots \cL_{\omega_1,g_0} \phi|_\alpha \,\rd\eta^n(\omega_1,\dots,\omega_n)
\\
&\leq |\phi|_\alpha \|\phi^{-1}\|_{\infty}^2 \|\phi\|_\infty+ \|\phi^{-1}\|_\infty\int |\cL_{\omega_n,g_{n-1}}\cdots \cL_{\omega_1,g_0} \phi|_\alpha \,\rd\eta^n(\omega_1,\dots,\omega_n).
\end{align*}

With the aid of Lemma \ref{lem:L_prod_bound}, the bound in~\eqref{eq:L_prod_sup} and Lemma~\ref{lem:R^2_bound}, we can bound the integral in the last line above using Hölder's inequality and independence. Namely,
\begin{align*}
&\quad \int |\cL_{\omega_n,g_{n-1}}\cdots \cL_{\omega_1,g_0} \phi|_\alpha \,\rd\eta^n(\omega_1,\dots,\omega_n)
\\
&\leq \bE\!\left[ (1+R_n)\left(\sum_{j = 1}^n \frac{\|\phi\|_\infty \Delta_{\omega_j}}{\lambda_{\omega_n}\cdots\lambda_{\omega_{j+1}}}  +  \sum_{k=0}^{n-1} \frac{\ve H\|\phi\|_\infty }{\lambda_{\omega_n}^{\alpha}\cdots \lambda_{\omega_{k+1}}^{\alpha}} + \frac{|\phi|_\alpha}{\lambda_{\omega_n}^{\alpha}\cdots \lambda_{\omega_1}^{\alpha}}\right)\right]
\\
&\leq \left(\bE\!\left[(1+R_n)^2\right]\right)^{1/2}\left\{\|\phi\|_\infty \sum_{j = 1}^n \left(\bE\!\left[\frac{\Delta_{\omega_j}^2}{\lambda_{\omega_n}^2\cdots\lambda_{\omega_{j+1}}^2}\right]\right)^{1/2}\right.
\\
&\qquad\qquad\qquad\qquad\qquad +  \left.\ve H \|\phi\|_\infty  \sum_{k=0}^{n-1} \left(\bE\!\left[\frac{1}{\lambda_{\omega_n}^{2\alpha}\cdots \lambda_{\omega_{k+1}}^{2\alpha}}\right]\right)^{1/2} + |\phi|_\alpha \left(\bE\!\left[\frac{1}{\lambda_{\omega_n}^{2\alpha}\cdots \lambda_{\omega_1}^{2\alpha}}\right]\right)^{1/2}\right\}
\\
&\leq C_R^{1/2}\!\left(\|\phi\|_\infty  \langle\Delta^2\rangle^{1/2}\sum_{j = 1}^n  \langle\lambda^{-2}\rangle^{(n-j)/2} + \ve H\|\phi\|_\infty \sum_{k=0}^{n-1} \langle\lambda^{-2\alpha}\rangle^{(n-k)/2} 
 + |\phi|_\alpha \langle\lambda^{-2\alpha}\rangle^{n/2} \right)
\\
&\leq C_R^{1/2} \|\phi\|_\alpha\!   \left( \langle\Delta^2\rangle^{1/2}\sum_{j = 1}^n  \langle\lambda^{-2}\rangle^{(n-j)/2} 
+ \ve H \sum_{k=0}^{n-1} \langle\lambda^{-2\alpha}\rangle^{(n-k)/2} 
 +  \langle\lambda^{-2\alpha}\rangle^{n/2} \right).
\end{align*}
By Jensen's inequality, $\langle\lambda^{-2\alpha}\rangle \le \langle\lambda^{-2}\rangle^\alpha$. Since also $\langle\lambda^{-2}\rangle\le \langle\lambda^{-2}\rangle^\alpha<1$ holds, we can further bound the expression in the last line by
\beqn
C_R^{1/2} \|\phi\|_\alpha\! \left(\frac{\langle\Delta^2\rangle^{1/2} 
+ \ve H}{1-\langle\lambda^{-2}\rangle^{\alpha/2}} 
+ 1\right).
\eeqn
Finally, note that $|\phi|_\alpha$ is bounded by the Lipschitz constant of $\phi$ for all $\alpha\in(0,1)$.
\end{proof}

\medskip
We are finally in position to state the multiple correlation bound that is needed to prove Theorem~\ref{thm:VASIP}:
\begin{thm}\label{thm:multi}
There exist such a constant $\theta\in (0,1)$ and, given $\alpha\in(0,1)$, $H>0$ and $\ve>0$, a constant $C>0$ that 
\beqn
\left| E^\mu[F G_n] - E^\mu[F]E^\mu[G_n] \right| \le C\theta^{\alpha n}
\eeqn
holds for all $n\ge 0$, for all choices of $(f_k)_{k\ge 0}$ and $(t_k)_{k\ge 0}$ satisfying~\eqref{eq:f_cond} and~\eqref{eq:t_cond}.
\end{thm}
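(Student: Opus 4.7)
The plan is to recognize the left-hand side of \eqref{eq:multi_bound} as a single pair correlation that can be controlled directly by \eqref{eq:mix} of Theorem~\ref{thm:bounds}. Set
\beqn
\Phi = g_m\hat\cP_{g_{m-1}}\cdots\hat\cP_{g_1}\hat\cP_{g_0} \bfone
\quad\text{and}\quad
\Psi = \cQ_{g_{m+1}} \cQ_{g_{m+2}} \cdots \cQ_{g_{m+k}}\bfone.
\eeqn
Since $\mu$ is stationary for $\cQ$, we have $\int \cQ^n \Psi\,\rd\mu = \int \Psi\,\rd\mu$, so \eqref{eq:multi_bound} may be rewritten as
\beqn
E^\mu[FG_n] - E^\mu[F]E^\mu[G_n]
= \int \Phi\cdot \cQ^n\Psi\,\rd\mu - \int \Phi\,\rd\mu\int \Psi\,\rd\mu,
\eeqn
which is exactly the object estimated by \eqref{eq:mix} (valid for complex-valued $f\in C^\alpha$ and $g\in L^\infty$, as stated in Theorem~\ref{thm:bounds}).

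To invoke \eqref{eq:mix} with $f = \Phi$ and $g = \Psi$, one needs uniform control on $\|\Phi\|_\alpha$ and $\|\Psi\|_\infty$ over $m$, $k$, and all admissible choices of $(f_k)_{k\ge 0}$ and $(t_k)_{k\ge 0}$. The supremum bound $\|\Psi\|_\infty \le 1$ is immediate from \eqref{eq:Q_prod_bound}. For $\Phi$, the supremum bound $\|\Phi\|_\infty \le 1$ follows from \eqref{eq:P_prod_bound}, while the crucial uniform Hölder bound $|\Phi|_\alpha \le C$ is precisely the content of Proposition~\ref{prop:Holder_bound}. Hence $\|\Phi\|_\alpha \le 1 + C$ uniformly.

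Substituting these bounds into \eqref{eq:mix} yields
\beqn
\left| E^\mu[FG_n] - E^\mu[F]E^\mu[G_n]\right| \le C'\,\theta^{\alpha n}
\eeqn
with the same exponential rate $\theta^{\alpha n}$ and a constant $C'$ depending only on $\alpha$, $H$, $\varepsilon$ (through Proposition~\ref{prop:Holder_bound}), and on the system constants. Since all genuine work has already been carried out in Proposition~\ref{prop:Holder_bound} (which handled the non-trivial Hölder estimate for the twisted transfer-operator products by means of Lemmas~\ref{lem:L_prod}--\ref{lem:L_prod_bound} together with the moment bound of Lemma~\ref{lem:R^2_bound}), I do not anticipate any substantive obstacle; the proof of Theorem~\ref{thm:multi} is essentially an assembly step. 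The only point requiring a moment's attention is that \eqref{eq:mix} must be applied to complex-valued $\Phi$, but Theorem~\ref{thm:bounds} explicitly allows this.
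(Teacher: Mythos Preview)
Your proposal is correct and follows the same assembly as the paper's own proof: apply \eqref{eq:mix} to the identity \eqref{eq:multi_bound} after invoking \eqref{eq:Q_prod_bound}, \eqref{eq:P_prod_bound}, and Proposition~\ref{prop:Holder_bound}. Your explicit use of stationarity to rewrite $\int \cQ^n\Psi\,\rd\mu = \int \Psi\,\rd\mu$ is a helpful clarification that the paper leaves implicit.
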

\begin{proof}
This follows immediately from~\eqref{eq:mix} of Lemma~\ref{thm:bounds}, once we collect~\eqref{eq:multi_bound}, \eqref{eq:Q_prod_bound}, \eqref{eq:P_prod_bound} and Proposition~\ref{prop:Holder_bound}.
\end{proof}

\medskip
The following theorem is a special case of the main result in~\cite{Gouezel_2010}.
\begin{thm}[Gou\"ezel~{\cite{Gouezel_2010}}]
Let $(\bfA_n)_{n\geq 0}$ be a stationary sequence of $\bR^d$-valued random variables which is centered and bounded. Given integers $n>0$, $m>0$, $0\leq b_1<b_2<\dots<b_{n+m+1}$, $k\geq 0$, and vectors $\bft_1,\dots,\bft_{n+m}\in\bR^d$, define
\beqn
X_{n,m}^{(k)} = \sum_{j=n}^m \bft_j\cdot\sum_{l=b_j+k}^{b_{j+1}-1+k}\bfA_l.
\eeqn
Assume that there exist such constants $\ve>0$, $C>0$, and $c>0$ that
\beq\label{eq:Gouezel}
\left|E\!\left(e^{i X_{1,n}^{(0)}+i X_{n+1,n+m}^{(k)}}\right)-E\!\left(e^{i X_{1,n}^{(0)}}\right)\! E\!\left(e^{i X_{n+1,n+m}^{(k)}}\right)\right|
\leq Ce^{-ck}\!\left(1+\max_{1\leq j\leq n+m}|b_{j+1}-b_j|\right)^{C(n+m)}
\eeq
holds for all choices of the numbers $n$, $m$, $b_j$, $k>0$, and of the vectors $\bft_j$ satisfying $|\bft_j|\le\ve$. Then items~(1)--(3) of Theorem~\ref{thm:VASIP} (with $E$ in place of $E^\mu$) are true for the process~$(\bfA_n)_{n\geq 0}$.
\end{thm}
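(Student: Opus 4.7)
The plan is to follow Gou\"ezel's original strategy from~\cite{Gouezel_2010}: use hypothesis~\eqref{eq:Gouezel} to show that partial sums decompose into almost independent blocks, and couple the blocks to independent Gaussians via a quantitative characteristic-function approximation in the spirit of Berkes--Philipp. Items (1) and (2) follow from~\eqref{eq:Gouezel} applied with small $n+m$. Taking $n=m=1$, $b_1=0$, $b_2=1$, $b_3=2$, $|\bft_1|,|\bft_2|\le \ve$, expanding to second order in the $\bft_j$, and letting $\ve\to 0$ extracts an exponential pair-correlation bound $|E(\bfA_0\otimes\bfA_k)|\le C'e^{-ck}$. Combined with centering and stationarity, this yields absolute convergence of $\sum_m E(\bfA_0\otimes\bfA_m)+E(\bfA_m\otimes\bfA_0)$ and hence item (1). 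For (2), set $S_N=\sum_{k=0}^{N-1}\bfA_k$ and split $[0,N-1]$ into $q_N\sim N^{1/2}$ big blocks of length $\sim N^{1/2}$ separated by gap blocks of length $\sim\log N$; apply~\eqref{eq:Gouezel} iteratively to factorize $E(e^{i\bft\cdot S_N/\sqrt N})$ into a product of within-block characteristic functions, modulo an exponentially small error, and analyze each factor via a Lindeberg--Feller argument for bounded stationary variables.

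For the ASIP in (3), fix $\lambda>\tfrac14$ and partition the integers into alternating big blocks $I_j$ of length $L_j\sim j^{\beta}$ (for a suitable $\beta$ depending on $\lambda$) and gap blocks of length $k_j\sim(\log L_j)^2$. Set $Y_j=\sum_{l\in I_j}\bfA_l$. Iterating~\eqref{eq:Gouezel} bounds the joint characteristic function of $(Y_1,\dots,Y_J)$ by the product of its marginals up to an error that is summable in $J$; the choice $k_j\sim(\log L_j)^2$ is made precisely so that the polynomial factor $(1+\max|b_{j+1}-b_j|)^{C(n+m)}$ in~\eqref{eq:Gouezel} is absorbed. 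A single-block version of (2) yields a quantitative Gaussian approximation to the law of each $Y_j$. I would then invoke the Berkes--Philipp coupling lemma to construct, on an enlarged probability space, independent copies $Y'_j\stackrel{d}{=}Y_j$ together with independent Gaussians $\bfB'_j\sim\cN(0,L_j\bfSigma^2)$ such that $|Y'_j-\bfB'_j|=o(L_j^{1/2-\rho})$ almost surely, for some $\rho>0$ depending on the regularity parameters in~\eqref{eq:Gouezel}. Stitching the $Y'_j$ together (adding gap contributions, which are uniformly $O(k_j)$ by boundedness) produces the process $(\bfA^*_n)_{n\ge 0}$ with the correct joint law, and the cumulative error over $j\le J(N)$ is $o(N^\lambda)$ by the choice of $\beta$.

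The principal obstacle is the quantitative coupling in item (3): passing from ``characteristic functions of block sums are close to products of Gaussians'' to an almost-sure statement with error exponent arbitrarily close to $\tfrac14$. This demands a delicate balance between $L_j$, $k_j$, the polynomial factor in~\eqref{eq:Gouezel}, the polynomial rate in the Berkes--Philipp single-block approximation, and the prescribed error budget $o(N^\lambda)$. The threshold $\lambda>\tfrac14$ emerges from the best achievable polynomial rate in the characteristic-function-to-Gaussian coupling at the level of a single block; improving it would require substantially finer estimates beyond those needed for (1) and (2).
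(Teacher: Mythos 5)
The paper does not prove this statement at all: it is imported verbatim as a special case of the main theorem of Gou\"ezel~\cite{Gouezel_2010}, and the authors' ``proof'' is the citation. So there is no internal argument to compare yours against; the only question is whether your sketch would stand on its own as a proof of Gou\"ezel's theorem, and it would not. Your architecture (blocks separated by gaps, iterated use of~\eqref{eq:Gouezel} to factorize characteristic functions, a per-block Gaussian approximation, then a coupling) is indeed the architecture of the actual proof, and your extraction of exponential pair-correlation decay for item~(1) by a second-order expansion in $\bft$ is essentially right (though you should fix $|\bft_j|=\ve$ rather than ``let $\ve\to0$'', since the right-hand side of~\eqref{eq:Gouezel} does not scale with $|\bft|$ and shrinking $\bft$ only worsens the constant after dividing by $|\bft|^2$).

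The genuine gap is the step you yourself flag as ``the principal obstacle'', and the tools you name do not close it. A Lindeberg--Feller argument for a single block yields only weak convergence of $Y_j/\sqrt{L_j}$ with no rate, and the Berkes--Philipp lemma converts closeness of joint characteristic functions into a coupling that is close \emph{in probability} (Prokhorov-type distance), not into the per-block almost-sure bound $|Y'_j-\bfB'_j|=o(L_j^{1/2-\rho})$ you assert. To get an almost-sure polynomial rate one needs a quantitative strong approximation for the block sums --- in Gou\"ezel's proof this is Zaitsev's multidimensional analogue of the Koml\'os--Major--Tusn\'ady/Sakhanenko estimates for sums of independent, non-identically distributed random vectors, applied after the characteristic-function factorization has reduced the block sums to (approximately) independent variables --- followed by a Borel--Cantelli argument and the Philipp--Stout-style optimization of the block exponent $\beta$, which is where the threshold $\lambda>\tfrac14$ actually comes from. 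One must also handle the possible degeneracy of $\bfSigma^2$ (it is only semi-positive-definite). None of this is routine, which is precisely why the paper treats the theorem as a black box; in the context of this paper the correct ``proof'' is the citation, and if you want to supply one you must either reproduce Gou\"ezel's argument in full, including the Zaitsev input, or explicitly invoke it.
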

In other words, it is now enough to prove that~\eqref{eq:Gouezel} holds in our case, with $\bfA_n$ as defined in~\eqref{eq:A_n}.
This is immediate, as
\beqn
\bft_j\cdot \bfA_l(\omega,x) = \abs{\bft_j}\left(\frac{\bft_j}{\abs{\bft_j}}\cdot \bff\circ T_{\omega_l}\circ\dots\circ T_{\omega_1}(x)\right),
\eeqn
where the maps $\frac{\bft_j}{\abs{\bft_j}}\cdot \bff:\bS\to\bR$, $j\ge 0$, are uniformly Hölder continuous:
\beqn
\left |\frac{\bft_j}{\abs{\bft_j}}\cdot \bff(x)-\frac{\bft_j}{\abs{\bft_j}}\cdot \bff(y) \right| \le |\bff(x)-\bff(y)| \le |\bff|_\alpha\,d(x,y)^\alpha.
\eeqn
Therefore, the difference on the left side of~\eqref{eq:Gouezel} is of the general form~\eqref{eq:corr} with $F$ and $G_n$ as in~\eqref{eq:F&G}. Theorem~\ref{thm:multi} thus yields the bound $C\theta^{\alpha n}$ on the right side of~\eqref{eq:Gouezel}.

\medskip
The proof of Theorem~\ref{thm:VASIP} is complete.
\qed


\medskip
\section{Proof of Lemma~\ref{lem:degeneracy}}\label{sec:degeneracy}

Let $\Phi$ denote the skew product map $\Phi(\omega,x) = (\sigma\omega,T_{\omega_1}x)$ and $\pi$ the projection $\pi(\omega,x) = x$. Abusing notation, we write $P^\mu = \bP\times\mu$ and $E^\mu$ for the corresponding expectation in this section; this measure is invariant for~$\Phi$.
Setting $\bfA_n = \bff\circ\pi\circ\Phi^n$ (cf.~\eqref{eq:A_n}), observe that
\beq\label{eq:cov_new}
\begin{split}
\bfSigma^2 & = E^\mu(\bfA_0\otimes\bfA_0)+ \sum_{m=1}^{\infty} E^\mu (\bfA_0\otimes\bfA_{m} + \bfA_{m}\otimes\bfA_0).
\end{split}
\eeq
Denote
\beqn
\bfS_n = \sum_{m=0}^{n-1} \bfA_{m} .
\eeqn
\begin{lem}
There exists a constant $C\geq 0$ such that
\beq\label{eq:cov_new_bound}
\sup_{n\geq 0}\left\| E^\mu\! \left(\bfS_n\otimes \bfS_n \right) - n\bfSigma^2 \right\| \leq C.
\eeq
\end{lem}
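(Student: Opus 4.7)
The plan is to expand $E^\mu(\bfS_n\otimes\bfS_n)$ by its definition, group equal-lag terms via stationarity, and then compare term by term with the series representation of $n\bfSigma^2$ in~\eqref{eq:cov_new}. The discrepancy will reduce to two explicit tails, each of which is controlled by the exponential pair-correlation bound~\eqref{eq:mix}.

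First, since $P^\mu = \bP\times\mu$ is $\Phi$-invariant, for $0\le i\le j\le n-1$ we have
\beqn
E^\mu(\bfA_i\otimes\bfA_j) = E^\mu(\bfA_0\otimes\bfA_{j-i}),
\eeqn
and similarly for $i>j$. Counting the pairs $(i,j)\in\{0,\dots,n-1\}^2$ with $|i-j|=m$ then yields
\beqn
E^\mu(\bfS_n\otimes\bfS_n) = nE^\mu(\bfA_0\otimes\bfA_0) + \sum_{m=1}^{n-1}(n-m)\,C_m,\qquad C_m := E^\mu(\bfA_0\otimes\bfA_m) + E^\mu(\bfA_m\otimes\bfA_0).
\eeqn
Subtracting the series in~\eqref{eq:cov_new} gives the exact residual
\beq\label{eq:residual}
E^\mu(\bfS_n\otimes\bfS_n) - n\bfSigma^2 = -\sum_{m=1}^{n-1} m\,C_m \,-\, n\sum_{m=n}^{\infty} C_m.
\eeq

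Second, I would show that $\|C_m\|\le C'\theta^{\alpha m}$ for all $m\ge 1$ using~\eqref{eq:mix}. Writing $\bff = (f_1,\dots,f_d)$ with each $f_i$ Hölder continuous and $\int f_i\,\rd\mu=0$, the entries of $E^\mu(\bfA_0\otimes\bfA_m)$ are, by conditioning on $X_0$,
\beqn
E^\mu\!\bigl(f_i(X_0)f_j(X_m)\bigr) = \int f_i\cdot \cQ^m f_j\,\rd\mu,
\eeqn
which by~\eqref{eq:mix} (and $\int f_i\,\rd\mu\int f_j\,\rd\mu=0$) is bounded by $C\|f_i\|_\alpha\|f_j\|_\infty\theta^{\alpha m}$. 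The analogous bound for $E^\mu(\bfA_m\otimes\bfA_0)$ follows by symmetry.

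Third, plugging this into~\eqref{eq:residual} and using any submultiplicative matrix norm,
\beqn
\bigl\|E^\mu(\bfS_n\otimes\bfS_n) - n\bfSigma^2\bigr\| \le C'\sum_{m=1}^{\infty} m\,\theta^{\alpha m} \,+\, C'\, n\sum_{m=n}^{\infty} \theta^{\alpha m} \le C'\sum_{m=1}^{\infty} m\,\theta^{\alpha m} \,+\, \frac{C'\,n\theta^{\alpha n}}{1-\theta^{\alpha}}.
\eeqn
The first sum is a convergent constant, and $\sup_{n\ge 0} n\theta^{\alpha n}<\infty$, so the right-hand side is bounded independently of $n$, yielding~\eqref{eq:cov_new_bound}. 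The only non-routine point is the identification $E^\mu(f_i(X_0)f_j(X_m))=\int f_i\cdot\cQ^m f_j\,\rd\mu$, which however is immediate from the definition of $\cQ$ and the fact that $\mu$ is stationary for the Markov chain.
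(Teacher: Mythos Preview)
Your proof is correct and follows essentially the same route as the paper: both expand $E^\mu(\bfS_n\otimes\bfS_n)$ by stationarity, subtract the series for $n\bfSigma^2$, and bound the residual using the exponential pair-correlation estimate~\eqref{eq:mix}. The only cosmetic difference is that the paper writes the residual as $\sum_{m\ge 1} a_n(m)\,C_m$ with $|a_n(m)|\le m$ and bounds it in one stroke by $\sum_{m\ge 1} m\|C_m\|$, whereas you keep the two tails in~\eqref{eq:residual} separate and observe that $\sup_n n\theta^{\alpha n}<\infty$; both are equivalent bookkeeping.
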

\begin{proof}
Using invariance,
\beqn
\begin{split}
E^\mu\! \left(\bfS_n\otimes \bfS_n \right) 
& =
n\,E^\mu(\bfA_0\otimes\bfA_0)+ \sum_{m=1}^{n-1} (n-m) \,E^\mu (\bfA_0\otimes\bfA_{m} + \bfA_{m}\otimes\bfA_0).
\end{split}
\eeqn
Recalling~\eqref{eq:cov_new}, we have
\beqn
\begin{split}
E^\mu\! \left(\bfS_n\otimes \bfS_n \right) - n\bfSigma^2 
& = \sum_{m=1}^{\infty} a_n(m) \,E^\mu (\bfA_0\otimes\bfA_{m} + \bfA_{m}\otimes\bfA_0)
\end{split}
\eeqn
where $a_n(m) = -m$ for $1\leq m<n$ and $a_n(m) = -n$ for $m\geq n$. Because $|a_n(m)|\leq m$,
\beqn
\left \|E^\mu\! \left(\bfS_n\otimes \bfS_n \right) - n\bfSigma^2 \right\| \leq \sum_{m=1}^{\infty} m \left\| E^\mu (\bfA_0\otimes\bfA_{m} + \bfA_{m}\otimes\bfA_0) \right\| .
\eeqn
But
\beqn
E^\mu (\bfA_0\otimes\bfA_{m} + \bfA_{m}\otimes\bfA_0) = \int \! \left(\bff\otimes \cQ^m\bff + \cQ^m\bff\otimes\bff\right) \rd\mu
\eeqn
is exponentially small in $m$ according to Theorem~\ref{thm:bounds}, so the sum above converges.
\end{proof}
Recall the $\mu$-average of $\bff$ vanishes.
Given a vector $\bfv\in\bR^d$, we define $f_\bfv = \bfv^\rT \bff$,
\beqn
X_k = f_\bfv\circ\pi\circ\Phi^k
\quad\text{and}\quad
S_n = \sum_{k=0}^{n-1} X_k.
\eeqn
Since $\bfv^\rT(\bfA_m\otimes\bfA_n)\bfv = \bfv^\rT(\bfA_n\otimes\bfA_m)\bfv = X_n X_m$ for all $n,m\ge 0$,~\eqref{eq:cov_new_bound} gives
\beq\label{eq:cov_new_bound2}
\left|E^\mu\!\left(S_n^2 \right) - n\,\bfv^\rT  \bfSigma^2 \bfv\right|  = \left|\bfv^\rT \bigl(E^\mu(\bfS_{n}\otimes \bfS_{n})  - n \bfSigma^2 \bigr)\bfv \right| \leq C|\bfv|^2
\eeq
uniformly for $n\geq 1$. 

\medskip

From here, the proof is similar to~\cite{AyyerLiveraniStenlund_2009}. Suppose $\bfSigma^2$ is degenerate. In other words, there exists a vector $\bfv\in\bR^d$ such that $\bfv^\rT \bfSigma^2 \bfv = \mathbf{0}$.  By the bound above, the random variables
$S_n$
are uniformly bounded in $L^2(P^\mu)$. By the Banach--Alaoglu theorem, there exists a sequence $(n_k)_{k\ge 1}$ and $S\in L^2(P^\mu)$ such that
\beqn
\lim_{k\to\infty} E^\mu(h S_{n_k}) = E^\mu(h S)
\eeqn
for all $h\in L^2(P^\mu)$. In particular, if $h$ is independent of the sequence $\omega$ and $g = \mu(S)$,
\beqn
\lim_{k\to\infty} \sum_{j=0}^{{n_k}-1}  \mu \! \left (h\cQ^j f_\bfv\right ) = \mu (h g).
\eeqn
A similar identity is obtained with $\hat\cP h$ in place of $h$. Therefore,
\beqn
\mu \!\left (h (f_\bfv-g+\cQ g) \right) 
= \mu(hf_\bfv)-\lim_{k\to\infty} \sum_{j=0}^{{n_k}-1}  \mu \! \left (h\cQ^j f_\bfv\right ) + \lim_{k\to\infty} \sum_{j=1}^{{n_k}}  \mu \! \left (h\cQ^j f_\bfv\right )
=  \lim_{k\to\infty}  \mu \! \left (h\cQ^{n_k} f_\bfv\right ) .
\eeqn
By Theorem~\ref{thm:bounds}, the last limit vanishes. In other words, there exists $g\in L^2(\mu)$ such that 
\beq\label{eq:coboundary_Q}
f_\bfv = g-\cQ g.
\eeq

\medskip
\noindent{\it Claim.} In fact,
\beq\label{eq:coboundary}
f_\bfv(x) = g(x) - g(T_{\omega_1}x)
\eeq
almost surely.

\medskip
\noindent Accepting the Claim for now, pick $\omega_1$ so that the above identity holds for almost every $x$. Standard Livschitz (Liv{\v{s}}ic) rigidity theory then shows that $g$ is H\"older continuous~\cite{Livsic_1971,Livsic_1972}. In particular,~\eqref{eq:coboundary} holds for all~$x$.
This proves the lemma in one direction. 

\medskip
To prove the lemma in the other direction, suppose~\eqref{eq:coboundary} holds almost surely for some nonzero vector $\bfv$ and some H\"older continuous $g$. Then $S_n = g - g\circ\pi\circ\Phi^{n}$ holds $P^\mu$-almost-everywhere, and
\beqn
E^\mu\!\left(S_n^2 \right) = \| g-g\circ\Phi^n \|_{L^2(P^\mu)}^2 \leq 4\| g \|_{L^2(\mu)}^2 .
\eeqn
Combining the bound with \eqref{eq:cov_new_bound2} we get $\bfv^\rT  \bfSigma^2 \bfv \leq n^{-1}\bigl(C|\bfv|^2 + 4\| g \|^2_{L^2(\mu)}\bigr)$ for all $n\geq 1$, which is only possible if $\bfSigma^2$ is degenerate in the direction of $\bfv$. 

\medskip
It thus remains to prove the earlier Claim. To that end, we define
\beqn
G_k = g\circ\pi\circ\Phi^k
\eeqn
and
\beqn
M_n = \sum_{k=0}^{n-1} (X_k - G_k + G_{k+1}) = S_n - G_0 + G_n.
\eeqn
Since $(S_n)_{n\ge 1}$ is uniformly bounded in $L^2(P^\mu)$, so is $(M_n)_{n\ge 1}$. Since~\eqref{eq:coboundary_Q} holds, the latter sequence is also a martingale adapted to the filtration $(\mathfrak{F}_n)_{n\ge 1}$ where $\mathfrak{F}_n$ is the sigma-algebra generated by the random variables $x,\omega_1,\dots,\omega_n$. Therefore,
\beqn
E^\mu(M_n^2) = \sum_{k=0}^{n-1} E^\mu\!\left((X_k - G_k + G_{k+1})^2\right) = n E^\mu\!\left((X_0 - G_0 + G_1)^2\right).
\eeqn
Combining these two facts, it follows that $X_0 - G_0 + G_1$ vanishes almost surely. The claim is proved.

\medskip
This finishes the proof of Lemma~\ref{lem:degeneracy}.
\qed



\bigskip

\bibliography{Random_expanding}{}
\bibliographystyle{plainurl}


\vspace*{\fill}


\end{document}